\title[ergodic and foliated kernel-differentiation] 
{
ergodic and foliated kernel-differentiation method for linear responses of random systems
}
\begin{document}

\begin{abstract}

We extend the kernel-differentiation method for the linear response (parameter-derivative of averaged observables) of random dynamical systems. First, for the linear response of physical (or stationary) measures, we extend the method to an ergodic version, which is sampled by an infinitely long sample path, so it is more efficient than previous results. This is achieved by combining the likelihood ratio trick, decay of correlations, and ergodic theorem. Second, when the noise and perturbation are along a given foliation, we show that the method is still valid for both finite and infinite time. These results are derived using basic calculus via a microscopic view of transfer operators.

We use the ergodic formula to numerically compute the linear response of a tent map with additive noise. We use the foliated formula to compute the linear response of an unstable neural network with 51 layers $\times$ 9 neurons, with respect to the bias parameter. We show that adding foliated noise incurs a smaller error than adding noise in all directions, in terms of approximating the trend between the averaged observable and the parameter. We give a rough error analysis of the kernel-differentiation method and show that it can be expensive for small noise.

Finally, we derive the three basic linear response methods (path-perturbation, divergence, and kernel-differentiation methods) in simplified settings, and propose a potential future program unifying them.

\smallskip
\noindent \textbf{Keywords.}
Likelihood ratio method, 
Linear response,
Random dynamical systems,
Ergodic theorem,
Unstable neural networks.

\smallskip
\noindent \textbf{AMS subject classification numbers.}
37M25, 
65D25, 
65P99, 
65C05, 
62D05, 
60G10. 
\end{abstract}

\maketitle

\section{Introduction}
\label{s:intro}

\subsection{Literature review: linear response, path-perturbation method, and divergence method}
\label{s:review}

The averaged statistic of a dynamical system is of central interest in applied sciences.
The average can be taken in two ways.
When the system is random, we can average with respect to the randomness; if the system runs for a long time, we can average with respect to time.
\ree{
The long-time average can be defined for deterministic systems, and the limit measure is called the physical measure or SRB measure, and its existence is discussed in  \cite{young2002srb,srbmap,srbflow}.
}
If the system is both random and long-time, the long-time limit measure is also called the \ree{stationary measure} (we still call it the physical measure), and its existence is discussed in textbooks such as \cite{DurretText}.

We are interested in the linear response, which is the derivative of the averaged observable with respect to the parameters of the system.
The linear response is a fundamental tool for many purposes.
For example, in aerospace design, we want to know how small changes in geometry would affect the average lift of an aircraft, 
which was answered for non-chaotic fluids \cite{Jameson1988} but only partially answered for chaotic fluids \cite{Ni_CLV_cylinder}.
In climate, we want to ask how much the earth temperature would change if we perturb the $CO_2$ level \cite{GhLuc,Froyland2021}.
In machine learning, we want to extend the conventional backpropagation method to cases with gradient explosion; current practices have to avoid gradient explosion but that is not always achievable \cite{clip_gradients2,resnet}.

There are three basic methods for expressing and computing the linear response: the path perturbation method, the divergence method, and the kernel-differentiation method.
\ree{
The basic idea of the three methods can be illustrated in the one-step system in \Cref{s:3lin}.
In this review, we focus on the differences among these basic methods for many-step and infinite-step systems.
}
\lil{for}
\ree{
In particular, the pros and cons of the numerical algorithms from
the three basic methods are the same for both the many-step and infinite-step cases.
}

The path perturbation method averages the path perturbation over a lot of orbits; this is also known as the ensemble method or the stochastic gradient method (see, for example, \cite{Lea2000,eyink2004ruelle,lucarini_linear_response_climate}).
The proofs of
\ree{the path-perturbation formula}
for the physical measure of hyperbolic systems were given, for example, in \cite{Ruelle_diff_maps,Dolgopyat2004,Jiang2012}.
However, when the system is chaotic or unstable, that is, when the pathwise perturbation grows exponentially fast with respect to time, the method becomes too expensive since it requires many samples to average a huge integrand.

\lil{noise}
\ree{
The backpropagation method used in machine learning belongs to the path-perturbation method, and it encounters the same difficulty when the system is unstable, even though it runs for only a finite number of steps.
This is known as the `gradient explosion', whose avoidance is the basic concern in the designs of all neural networks.
If we can compute the linear response efficiently for general systems, we would have much more freedom in designing neural-networks.
}

The divergence method is also known as the transfer operator method, since the perturbation of the measure transfer operator is some divergence.
Traditionally, the divergence method is functional and the formula involves
\lil{distri}
\ree{
distributional derivatives (which are not pointwisely defined functions)
}
when the invariant measure is singular.
This is common for systems with a contracting direction (see, for example, \cite{Gouezel2008,Baladi2017}).
This difficulty cannot be circumvented by transformations such as the Livisic theorem in \cite{Gouezel2008}.
Hence, the traditional divergence formulas can not, or is very expensive to, be computed by a Monte-Carlo approach.
Rather, we must seek to approximate distributional derivatives on some basis functions, whose computation is cursed by dimensionality (see, for example, \cite{Galatolo2014,Wormell2019,SantosGutierrez2020,Bahsoun2018,Pollicott2000}).

\ree{
Hyperbolic systems are difficult for both methods, but it allow a nice decomposition into stable and unstable directions, and we can apply the path-perturbation method on the stable, divergence method on the unstable:
}
we see a glimpse of this strategy in the proofs mentioned above, and also in numerical attempts such as \cite{abramov2007blended}.
But all previous formulas involve exponentially growing or distributive terms, so people could not pointwisely sample the hyperbolic linear response, could not compute linear response numerically in the Monte-Carlo fashion, and could not work in high-dimensions.
The `fast response' linear response formulas solved this difficulty for deterministic hyperbolic systems.
It is made up of two parts, the adjoint shadowing lemma \cite{Ni_asl} and the equivariant divergence formula \cite{TrsfOprt}.
The fast response formula is the pointwise defined expression of the linear response without exponentially growing terms.
\nax{
It is also convenient for estimating the norm of the linear response operator on perturbations \cite{GN25}. 
The estimation given by the fast response method seems to be sharper than that given by the anisotropic Banach space method, by two orders of derivatives \cite{FP25optimalRes}.
}
The continuous-time versions of the fast response formulas are in \cite{Ni_asl,vdivF}.

\ree{
The common problem of all above methods is that they require some form of hyperbolicity.
In fact, Baladi showed that for some deterministic nonhyperbolic cases, such as the tent map, the $L^1$ linear response does not exist \cite{baladi07}.
}
\lil{leek}
\re{
But engineering applications still want to use gradients to do optimization, even though the true linear response may not exist.
So we have to modify the system so that the new system has linear response, but it is not too far from the original system; in particular, the local optimums should locate at similar places.
The linear response of the new system is then useful for optimizing the original system.
In this regard, Bahsoun and Galatolo showed that a tent map with cusps has the linear response \cite{bahsoun23}; we can interpret this as a deterministic modification of the tent map, which recovers the existence of linear responses.
An importance of their result is to show that we only need to make specific changes to specific local regions, not the entire map.
}

\subsection{Literature review: kernel-differentiation method}
\label{s:review2}

The third basic linear response method is the kernel-differentiation method for random systems.
Random dynamical systems with uniform noise are, in general, much more likely to have the linear response of physical measures than deterministic systems.
In fact, annealed linear response can exist when quenched linear response does not \cite{sedro23}.
This is because the kernel-differentiation method does not incur propagation of vectors or covectors (parameter-gradients), so it is not hindered by undesirable features of the deterministic dynamics.

There are mainly two groups of results in the kernel-differentiation method: the first group considers the linear response of physical measures.
Proofs of the existence of linear responses of physical measures for random systems were given in \cite{HaMa10,GG19,bahsoun20,ADF}.
\lil{dah}
\ree{
In particular, the kind of systems considered in the present paper (discrete time, noise having a regular distribution) are the ones considered in \cite{GG19}.
}
A main point in these proofs is to prove some uniform decay of correlation.
\ree{
The numerical results inspired by these theoretical results, such as \cite{optimal_response_23}, typically consider the linear response of physical measures, and approximate the expression on some basis function.
This can be very expensive in high dimensions.
}

\ree{
Another group of results in kernel-differentiation method typically considers finite-time systems, which is also known as the likelihood ratio method or the Monte-Carlo gradient method (see \cite{Rubinstein1989,Reiman1989,Glynn1990,PSW23}).
The main trick is to divide and multiply the kernel to recover the original distribution, so that we can sample the expression on sample paths from the original system.
This leads to Monte-Carlo-type method, which works well in high-dimensions.
For continuous-time SDEs, this is also known as the Cameron–Martin method, which is the foundation for a part of the Malliavin calculus \cite{MalliavinBook}.
The first issue for this group of results is that, when applied to physical measures of infinite-time systems,  the scale of the estimator is proportional to the orbit length, which can be too large.
}


Another issue in the previous likelihood ratio method is that we have to restart a new orbit very often. For each orbit, the observable function is only evaluated at the last step, while the earlier steps are somewhat wasted waiting for the orbits to land onto the physical measure.
This paper solves this problem by restricting our data to one single long orbit.
\lil{ga}
\ree{
  This issue is explained with more words at the end of \Cref{s:ergodic}.
}

Moreover, sometimes we are interested in the case where some directions in the phase space are of special importance, such as the level sets in the Hamiltonian system.
For these cases, the perturbation in the dynamics and the added noise are sometimes aligned to a subspace.
This paper extends the kernel-differentiation method to foliated spaces.

This paper is partly motivated by optimizing deterministic systems using linear response computed on a modified system.
We modify the deterministic system by adding noise, and then we use the kernel-differentiation method to compute the linear response of the new system. 
We shall see that such an approximation is meaningful for optimizing a model problem in machine learning.
In particular, for the purpose of optimizing a deterministic system, we shall see that if the system is foliated, then adding foliated noise yields a better approximation than adding uniform noise.

The next two topics following this paper, are to combine the kernel-differentiation method with the path-perturbation method (see \cite{dud}), and with the divergence method (see \cite{divKer}).
For discrete-time systems, this will help further reduce the size of the estimator.
For SDEs, when the diffusion coefficient depends on the parameter or the location, this new technique is necessary, and the pure kernel-differentiation method cannot work.

\subsection{Main results}
\label{s:results}

We consider the random dynamical system 
\lil{sho}
\ree{
in phase space (or state space) $\R^M$.
}
\[X_{n+1}=f^\gamma(X_n)+Y_{n+1}.\] 
Let  $h_n$ and $p_n$ be the probability density for $X_n\in \R^M$ and $Y_n\in \R^M$.
\lil{bie}
\reeout{
We need to know the expressions of $p_n$, but we do not need expressions of $h_n$.
}
We denote the density of the physical measure by $h$, which is the limit density of the dynamics.
Here $\gamma$ is the parameter that controls the dynamics and, hence, $h_n$ and $h$; by default $\gamma=0$.
We denote the perturbation $\delta(\cdot):=\partial (\cdot)/ \partial\gamma|_{\gamma=0}$, and when $f$ is invertible, define
\[\tf:=  f^\gamma \circ f^{-1}.\]
\lil{nian}
\ree{
$\Phi$ is a fixed $C^2$ observable function.
}

Through basic calculus, \Cref{s:1step} \textit{re}derives the kernel-differentiation formula for the linear response in one step.
\Cref{s:intui} gives a pictorial explanation of the main lemma.
In this simplest setting, we can see more easily how the kernel-differentiation method relates to other methods, and why it can be generalized to foliated situations.
\Cref{s:manystep} \textit{re}derives the formula for finitely many steps via
the measure-transfer-operator point of view, which is more convenient for our extensions.

The first contribution of this paper is \Cref{l:kd4h} in \Cref{s:infinite}, a kernel-differentiation formula for physical measures; it allows Monte Carlo computation in high dimensions, and the estimator is smaller than previous likelihood ratio method. 
It combines the decorrelation result proved in \cite{GG19} (and also \cite{HaMa10,bahsoun20,ADF}) and the likelihood ratio trick invented in \cite{CM44,MalliavinBook,Rubinstein1989,Reiman1989,Glynn1990}.
For this result, we assume some regularities of basic quantities, uniform convergence to physical measure, and uniform decay of correlation.

The second contribution of this paper, given in \Cref{s:ergodic}, is to improve the efficiency of generating samples by deriving the ergodic kernel-differentiation formula for physical measures.
With ergodic assumption, we can invoke the ergodic theorem to reduce the required data to one long orbit.
Hence, we only need to spend one spin-up time to land onto the physical measure, and then all data are fully utilized.
This improves the efficiency of the algorithm.
More specifically, 
let $dp$ denote the differential (this is basically the gradient) of $p$,
we prove

\begin{restatable} [orbitwise kernel-differentiation formula for $\delta h$]{theorem}{goldbach}
\label{t:dh}
Under \Cref{ass:basic,ass:hdh,ass:ergo}, we have
\[ \begin{split}
  \delta \int \Phi(x) h (x) dx
  \,\overset{\mathrm{a.s.}}{=}\, 
  \lim_{W\rightarrow\infty} \lim_{L\rightarrow\infty} - \frac 1L \sum_{n=1}^W  \sum_{l=1}^L \left( \Phi(X_{n+l}) - \Phi_{avg} \right) \, \delta f^\gamma X_l \cdot \frac{dp}p(Y_{1+l})
\end{split} \]
almost surely according to $X_0, Y_1, Y_2,\cdots \sim h\times p\times p \times \cdots$.
Here $\Phi_{avg}:=\int\Phi h$.
\end{restatable}

Here $W$ and $L$ are two separate and sequential limits, so we can generally choose $W\ll L$.
Our formula runs on only one orbit; hence, it is faster than previous likelihood ratio methods.
\Cref{s:generalize} generalizes the result to cases where randomness depends on location and parameters.

\ree{
The third contribution of this paper is: \Cref{s:foliation} extends the kernel-differentiation method to the case where the phase space is partitioned by submanifolds, 
}
and noise and perturbation are along the submanifolds (hence the noise is singular with respect to the Lebesgue measure).
\lil{sushi}
\ree{
Here we consider a more general phase space, $\cM$, an $M$-dimensional manifold.
}
Let $F = \{F_\alpha\}_{\alpha\in A}$ be a dimension-$c$ foliation on $\cM$, which is basically a collection of $c$-dimensional submanifolds partitioning $\cM$, and each submanifold is called a `leaf'.
Note that we do \textit{not} require $f$ to respect the foliation, that is, $x$ and $fx$ can be on two different leaves: This is more general than systems restricted on one submanifold.
Let $\mu_n$ be the measure for $X_n$.
Let $p(z, x_{n+1})$ be the density of probability of $X_{n+1}$, conditioned on $f(X_n)=z$, with respect to the Lebesgue measure on $F_\alpha(z)$.
The structure of \Cref{s:foliation} is similar to \Cref{s:derive}, and the main theorem is

\begin{restatable}[centralized foliated kernel-differentiation formula for $\delta \mu_T$ of time-inhomogeneous systems, comparable to \Cref{l:dhTn}] {theorem}{silverbach}
\label{t:dhTnfoli}
Under \Cref{ass:basic,ass:folibasic} for all $n$, we have
\[ \begin{split}
\delta \int \Phi(x_T)  d \mu_T(x_T)
=\E \left[ \left(\Phi_T(x_T) - \Phi_{avg,T} \right)
\sum_{m=1} ^ {T} \left( \delta \tf_m(z_m) \cdot 
\frac{d_z p_m(z_m, x_m)}{ p_m(z_m, x_m)} \right)
\right]
\\
:= \int_{x_0\in \cM} \int_{x_1\in F^1_\alpha(z_1)} \cdots \int_{x_T\in F^T_\alpha(z_T)}
\left(\Phi_T(x_T) - \Phi_{avg,T} \right)
\\
\sum_{m=1} ^ {T} \left( \delta \tf_m(z_m) \cdot 
\frac{d_z p_m(z_m, x_m)}{ p_m(z_m, x_m)} \right)
p_T(z_T, x_T)dx_T \cdots p_1(z_1, x_1) dx_1 d\mu_0(x_0).  
\end{split} \]  
Here $\Phi_{avg,T} := \int \Phi_T d \mu_T $.
Note that here $F, \tf, f, p$ can be different for different steps.
\end{restatable}

\Cref{s:algorithm} considers numerical realizations.
\Cref{s:procedure} gives a detailed list for the algorithm.
Note that the foliated case uses the same algorithm; the difference is in the setup of the problem.
\Cref{s:tent} illustrates the ergodic version of the algorithm on the tent map with additive noise.
\Cref{s:NN} illustrates the foliated version of the algorithm on a unstable neural network with 51 layers $\times$ 9 neurons.

\lil{bus}
\ree{
The fourth contribution of this paper is: investigating the potential of adding noise as a modification to deterministic systems for the purpose of optimizations.
}
In examples of \Cref{s:tent,s:NN}, the deterministic part does not have a useful linear response.
But after adding noise and using the kernel-differentiation algorithm, we get the derivative of the modified system, which is a reasonable reflection of the parameter-observable relation for the original system: this is useful for optimization of deterministic systems.
Moreover, \Cref{s:NN} shows that 
\nax{
the perturbation induced by the bias parameter in the neural network is foliated, so we can add noise along the foliation, and use our foliated formula to compute the linear response.
}
Then, we show that adding a low-dimensional noise along the foliation induces a much smaller error than adding noise in all directions.

\ree{
The fifth contribution of this paper is that \Cref{s:costErr} gives a rough estimate of the cost-error of the kernel-differentiation algorithm, which reminds us that the small-noise limit is very expensive.
}
We consider both the case where randomness is intrinsic in our model, and the case where we add randomness to deterministic models as an approximation.
This also helps to set some constants in the algorithm.

The sixth contribution of this paper is to distinguish and propose to unify three basic linear response methods. 
More specifically, \Cref{s:3lin} derives the three basic linear response formulas in one-step, to help the readers see the difference and the connections.
Then, \Cref{s:unify} proposes a program that combines the three basic formulas
(path-perturbation, divergence, and kernel-differentiation) 
to obtain a good approximation of the linear responses of deterministic systems.

The paper is organized as follows.
\Cref{s:derive} derives the kernel-differentiation formula where the noise is in all directions.
\Cref{s:foliation} derives the formula for foliated perturbations and noises.
\Cref{s:algorithm} gives the procedure list of the algorithm based on the formulas and illustrates two numerical examples.
\Cref{s:discuss} discusses some related issues.

\section{Deriving the kernel-differentiation linear response formula}
\label{s:derive}

\subsection{Preparations: notations on probability densities}
\label{s:prep}

Let $f^\gamma$ be a family of $C^2$ maps on the Euclidean space $\R^M$ of dimension $M$ parameterized by $\gamma$.
Assume that $\gamma \mapsto f^\gamma$ is $C^1$ from $\R$ to the space of $C^2$ maps.
The random dynamical system in this paper is given by 
\begin{equation} \begin{split} \label{e:dynamic}
  X_n =  f^\gamma (X_{n-1}) + Y_n ,
  \quad \textnormal{where} \quad 
  Y_n \iid  p.
\end{split} \end{equation}
The default value is $\gamma=0$, so $f:=f^{\gamma=0}$.
In this section, we consider the case where $p$ is any fixed $C^2$ probability density function.
The next section considers the case where $p$ is smooth along certain directions but singular along other directions.

\lil{run}
\ree{
We shall prove our result starting from finite-time cases, where we only consider the change of the probability density at a particular time step.
These intermediate results also apply to time-inhomogeneous cases, where $f$ and $p$ are different for each step.
}
For the time-inhomogeneous case, the dynamic is given by
\begin{equation} \begin{split} \label{e:dynamicinhomo}
  X_n =  f^\gamma_{n-1}(X_{n-1}) + Y_n ,
  \quad \textnormal{where} \quad 
  Y_n \sim  p_n.
\end{split} \end{equation}
Here, $X_n\in \R^{M_n}$ are not necessarily of the same dimension.
We shall exhibit the time dependence in \Cref{l:dhTn} and also in the numerics section.
On the other hand, for the infinite-time case in \Cref{t:dh}, we want to sample by only one orbit; this requires that $f$ and $p$ be repetitive among steps.

We define $h_T$ as the density of pushing-forward the initial measure $h_0$ for $T$ steps.
\[ \begin{split}
  h_T:=  ( L_p L_{f^\gamma})^T h_0.
\end{split} \]
Here $h_T$ depends on $\gamma$ and also on $h_0$; $L_f$ is the measure transfer operator of $f$,
which are defined by the integral equality
\begin{equation} \begin{split} \label{e:Lh}
  \int \Phi(x) \, (L_f h)(x) dx
  := 
  \int \Phi(fx)  h(x) dx.
\end{split} \end{equation}
Here $\Phi\in C^2(\R^M)$ is any fixed observable function.
\re{
The equivalent pointwise definition is,
\[ \begin{split}
(L_f h) (x) := \sum_{y\in f^{-1}x} \frac h { |Df| }(y) dy.
\end{split} \]
Note that $f^{-1}x$ is a set of points when $f$ is not injective.
}

For density $q$, $L_p q$ is pointwisely defined by convolution with density $p$:
\begin{equation} \begin{split} \label{e:Lppt}
  L_p q (x) 
  = \int q(x-y) p(y) dy
  = \int q(z) p(x-z) dz.
\end{split} \end{equation}
We shall also use the integral equality
\begin{equation} \begin{split} \label{e:Lpint}
  \int \Phi (x) L_p q (x) dx
  = \int p(y) \left( \int \Phi(x) q(x-y)  dx \right) dy
  \\
  \,\overset{z=x-y}{=}\,
  \iint \Phi(y+z) q(z) p(y) dz dy.
\end{split} \end{equation}
If we want to compute the above integration by Monte-Carlo method, then we should generate i.i.d. $Y_l=y_l$ and $Z_l=z_l$ according to density $p$ and $q$, then compute the mean of $\Phi(y_l+z_l)$.

To consider the differentiability of $h$ with respect to $\gamma$, we make the following simplifying assumptions about the differentiability of the basic quantities.
\re{
We are not optimal in terms of the regularity assumptions.
For example, the assumption $\Phi\in C^2$ in \Cref{ass:basic} is to match the assumption on $\phi$ in \Cref{ass:hdh}.
But in previous works such as \cite{GG19}, \Cref{ass:hdh} can be proved from many cases, even with less regularity in $\phi$, so we can reduce the regularity assumption for $\Phi$ in \Cref{ass:basic}.
}
\begin{assumption}
\label{ass:basic}
The densities $p,q,h$ have bounded $C^2$ norms;
$\gamma \mapsto f^\gamma$ is $C^1$ from $\R$ to the space of $C^2$ maps;
the observable function $\Phi$ is fixed and is $C^2$.
\end{assumption}

The linear response formula for finite time $T$ is an expression of $\delta h_{\gamma,T}$ by $\delta f^\gamma$, and 
\[ \begin{split}
  \delta(\cdot)
  :=\left. \frac {\partial (\cdot)}{\partial \gamma} \right|_{\gamma=0}.
\end{split} \]
Here, $\delta$ may as well be regarded as small perturbations.
In our notation,
\[ \begin{split}
  \delta f(x):=
  \delta f^\gamma(x):=
  \left.\pp{}{\gamma} f^\gamma(x)\right|_{\gamma=0} \in T_{fx} \R^M;
\end{split} \]
that is, $\delta f(x)$ is a vector at $fx$.
The linear response formula for finite-time is given by the Leibniz rule,
\begin{equation} \begin{split} \label{e:xu}
  \delta h_T
  = \delta (L_p L_{f^\gamma} )^T h_0
  = \sum_{n=0}^{T-1} ( L_p L_f )^{n} \, \delta(L_p L_{f^\gamma} ) \, (L_p L_f )^{T-1-n} h_0.
\end{split} \end{equation}
We shall give an expression for the perturbative transfer operator $\delta (L_p  L_{f^\gamma})$ later.

For the case where $T\rightarrow\infty$, 
\re{
we let $h^\gamma$ be a stationary density such that
}
\[ \begin{split}
L_p L_{f^\gamma} h^\gamma = h^\gamma
\end{split} \]
The corresponding measure $\mu^\gamma$ is called the 
\ree{stationary measure,} or physical measure.
We also define the correlation function,  
\begin{equation} \begin{split} \label{e:corre}
\E^\gamma[\phi(X_n)\psi(X_0,X_1)]
:= \int \phi(x_n) \psi(x_0,x_1) \mu^0(dx_0) p(y_1) dy_1 \cdots p(y_n) dy_n,
\end{split} \end{equation}
where $
x_n =  f^\gamma_{n-1}(x_{n-1}) + y_n
$ is a function of the dummy variables $x_0, y_1, y_2, \ldots$

For the integrated result in \Cref{l:kd4h}, we make the following assumptions for infinitely many steps.
\lil{life}
\ree{
Correspondingly, $\phi$ below will be substituted by $\Phi$ in \Cref{l:kd4h}, and $\psi(x_0,x_1)$ by $\delta f^\gamma x_{0} \cdot \frac {dp}{p} (y_{1})$,
where $y_1 = x_1-f x_0$.
}

\begin{assumption} \label{ass:hdh}
For a small interval of $\gamma$ containing zero,
\begin{enumerate}
    \item The physical density $h^\gamma$ is the weak$*$ limit of evolving $h^0$ under the dynamics with parameter $\gamma$,
      \ree{
\[ \begin{split}
  \lim_{T\in\N,\, T\rightarrow \infty} (L_p L_{f^\gamma})^T h^{0} = h^\gamma.
\end{split} \]
      }
    \item For any observable functions $\phi,\psi\in C^2$, the following sum of correlations 
\[
\sum_{n\ge1 }\left|\E^\gamma[\phi(X_n)\psi(X_0,X_1)]
- \E^\gamma[\phi(X_0)] \E^\gamma[\psi(X_0,X_1)]\right|
\]
\end{enumerate}
converges uniformly in terms of $\gamma$.
\end{assumption}

\Cref{ass:hdh} can be proved from more basic assumptions, which are typically more forgiving than the hyperbolicity assumption for deterministic cases.
For example, under \Cref{ass:basic}, the proof of the unique existence of and convergence to the stationary measure, where the system has additive noise with smooth density, can be found in probability textbooks such as \cite{DurretText}.
\cite{GG19} proves the decay of correlations when, roughly speaking, $f^\gamma$ has bounded variance and is mixing and that the transfer operator is bounded.
The technical assumptions and the rigorous proof for linear responses can also be found in \cite{HaMa10}.

\ree{
To sample the formula on one path in \Cref{t:dh,t:foli}, we need to make
}

\begin{assumption} \label{ass:ergo}
The physical measure $\mu^0$ of $\gamma=0$ is ergodic.
\end{assumption}

\Cref{ass:ergo} follows from some more basic assumptions, for example, when $p$ is supported on the entire phase space, we immediately have ergodicity (see \cite[example 6.1.6]{DurretText} for a proof in discrete space).
Also, in some engineering applications, there may be several ergodic components, but we could only see one of them, since some other practical concerns have (more or less) determined where we start our paths from; so we may have a delusion of ergodicity, due to ignorance of other initial conditions.
Finally, even when there are multiple ergodic components, our one-orbit formula improves the efficiency for sampling each component.

\subsection{One-step kernel-differentiation}
\label{s:1step}

We give a pointwise formula for $\delta (L_p L_{f^\gamma} q) (x)$, which seems to be a long-known wisdom.
An intuitive explanation of the following lemma is given in \Cref{s:intui}.
Then we give the integrated version which can be computed by Monte Carlo algorithms.
An intuition of the lemmas is given in \Cref{s:intui}.

\begin{lemma} [local one-step kernel-differentiation formula]
\label{l:local1step}
Under \Cref{ass:basic}, then for any density $h_0$
\[ \begin{split}
  \delta (L_p L_{f^\gamma} h_0) (x_1)
  = \int -  \delta f^\gamma (x_0)\cdot dp(x_1 - f x_0) \, h_0(x_0) dx_0.
\end{split} \]
Here $\delta:=\partial/\partial \gamma|_{\gamma=0}$, 
and $\delta f^\gamma (x_0) \cdot dp(x_1 - f x_0)$  is the derivative of the function $p(\cdot)$ at $x_1-fx_0$ in the direction $\delta f^\gamma (x_0)$, which is a vector at $fx_0$.
\end{lemma}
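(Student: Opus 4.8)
The plan is to sidestep any need to push $h_0$ forward through $f_\gamma$ (which would require $f_\gamma$ to be invertible and would introduce a Jacobian factor) by computing the composite operator $L_p L_{f_\gamma}$ directly from its integral characterizations. First I would fix the output point $x_1$ and expand the convolution using \cref{e:Lppt}, writing
\[
  (L_p L_{f_\gamma} h_0)(x_1) = \int (L_{f_\gamma} h_0)(z)\, p(x_1 - z)\, dz.
\]
The crucial observation is that the right-hand side is exactly the pairing of $L_{f_\gamma} h_0$ against the test function $z \mapsto p(x_1 - z)$, so the defining integral equality \cref{e:Lh} for the transfer operator applies with this choice of observable. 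This immediately collapses the expression into the closed form
\[
  (L_p L_{f_\gamma} h_0)(x_1) = \int p(x_1 - f_\gamma x_0)\, h_0(x_0)\, dx_0,
\]
which is pointwise in $x_1$ and, notably, never inverts $f_\gamma$.

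Next I would differentiate this expression in $\gamma$ at $\gamma = 0$. Since \cref{ass:basic} gives $p \in C^2$ with bounded norm and $\gamma \mapsto f_\gamma$ of class $C^1$ into $C^2$ maps, the integrand $p(x_1 - f_\gamma x_0)$ is $C^1$ in $\gamma$ with a $\gamma$-derivative that is controlled uniformly near $\gamma = 0$, so differentiation under the integral sign is legitimate. Applying the chain rule to $\gamma \mapsto p(x_1 - f_\gamma x_0)$, with the minus sign arising from differentiating $-f_\gamma$ inside the argument of $p$, produces $-\,dp(x_1 - f x_0) \cdot \delta f_\gamma(x_0)$, i.e. minus the directional derivative of $p$ at $x_1 - f x_0$ along $\delta f_\gamma(x_0)$. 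This gives
\[
  \delta (L_p L_{f_\gamma} h_0)(x_1) = \int -\, \delta f_\gamma(x_0) \cdot dp(x_1 - f x_0)\, h_0(x_0)\, dx_0,
\]
which is exactly the claimed formula.

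The genuinely substantive step is the first one: recognizing that convolving with $p$ after the pushforward collapses into a single integral pairing $h_0$ against $p(x_1 - f_\gamma(\cdot))$. This is precisely what makes the microscopic derivation work and what removes any dependence on invertibility or nondegeneracy of $Df_\gamma$—the smoothness that a singular pushforward would destroy is restored by the smooth noise density $p$, and the parameter derivative lands on $p$ rather than on $h_0$. The remaining point, interchanging $\delta$ with the integral, is routine: I would make it precise by dominating the $\gamma$-difference quotients by a constant multiple of the integrable density $h_0$, with the constant controlled by the sup-norms of $dp$ and of $\partial_\gamma f_\gamma$, and then invoke dominated convergence. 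I expect this technical estimate, rather than the algebra, to be the only place where the hypotheses of \cref{ass:basic} are actually consumed.
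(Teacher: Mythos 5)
Your proposal is correct and follows essentially the same route as the paper: expand $L_p$ as a convolution via \cref{e:Lppt}, use the integral definition \cref{e:Lh} with $p(x_1-\cdot)$ as the test function to collapse to $\int p(x_1 - f_\gamma x_0)\,h_0(x_0)\,dx_0$, then differentiate under the integral sign. Your added justification of the interchange via dominated convergence is a minor elaboration the paper leaves implicit.
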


\begin{remark*}[]
Note that we do not compute $\delta L_{f^\gamma} $ separately.
In fact, the main point here is that the convolution with $p$ allows us to differentiate the kernel $p$, which is typically much more forgiving than differentiating the dynamics $f^\gamma$.
\end{remark*}

\begin{proof}
First write a pointwise expression for $L_p L_{f^\gamma} h_0$: by the definition of $L_p$ in \Cref{e:Lppt},
\[ \begin{split}
  (L_p L_{f^\gamma} h_0) (x_1)
  = \int (L_{f^\gamma} h_0) (z_1) p(x_1-z_1) dz_1.
\end{split} \]
Since $p\in C^2(\R^M)$, we can substitute $p$ into $\Phi$ in the definition of $L_f$ in \Cref{e:Lh}, to get
\[ \begin{split}
  = \int  h_0(x_0) p(x_1- f^\gamma x_0) dx_0.
\end{split} \]
Differentiating with respect to $\gamma$, we have
\[ \begin{split}
  \delta (L_p L_{f^\gamma} h_0) (x_1)
  = \int - h_0(x_0) \delta f^\gamma (x_0) \cdot dp(x_1 - f x_0) dx_0.
\end{split} \]
\end{proof}

\begin{lemma}[integrated one-step kernel-differentiation formula with likelihood ratio] \label{l:niu}
Under \Cref{ass:basic}, we have
\[ \begin{split}
  \int \Phi(x_1) \delta (L_p L_{f^\gamma} h_0) (x_1) dx_1
  = \iint - \Phi(x_1) \delta f^\gamma (x_0) \cdot \frac {dp(y_1)}{p(y_1)} \, h_0(x_0) dx_0 \, p(y_1) dy_1.
\end{split} \]
Here, $x_1$ in the right expression is a function of the dummy variables $x_0$ and $y_1$, that is, $x_1=y_1+fx_0$.
The expression $\frac{dp}{p}$ is called the `likelihood ratio'.
\end{lemma}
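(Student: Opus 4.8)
The plan is to start from the pointwise identity of \cref{l:local1step} and convert it into the stated integral form by integrating against $\Phi$, applying Fubini's theorem, and performing a single change of variables. First I would multiply
\[ \delta (L_p L_{f_\gamma} h_0)(x_1) = \int - \delta f_\gamma (x_0)\cdot dp(x_1 - f x_0)\, h_0(x_0)\, dx_0 \]
by $\Phi(x_1)$ and integrate over $x_1$, producing a double integral in $(x_0,x_1)$. Under \cref{ass:basic}, the density $p$ and its first derivatives are bounded with bounded $C^2$ norm, $h_0$ has bounded $C^2$ norm, and $\Phi$ is a fixed $C^2$ observable; together with the $C^1$ dependence $\gamma\mapsto f_\gamma$ these ensure the integrand $\Phi(x_1)\,\delta f_\gamma(x_0)\cdot dp(x_1-fx_0)\,h_0(x_0)$ is absolutely integrable, so Fubini's theorem justifies interchanging the order of integration.

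Next I would carry out the change of variables already used in \cref{e:Lpint}: for each fixed $x_0$, substitute $y_1 = x_1 - f x_0$, so that $x_1 = y_1 + f x_0$ and $dx_1 = dy_1$ (the Jacobian of the translation is the identity). This replaces $dp(x_1 - f x_0)$ by $dp(y_1)$ and reproduces exactly the dependence $x_1 = y_1 + f x_0$ recorded in the statement. Finally, I would rewrite $dp(y_1) = \frac{dp(y_1)}{p(y_1)}\,p(y_1)$, which simultaneously exposes the likelihood-ratio factor $\frac{dp(y_1)}{p(y_1)}$ and supplies the probability weight $p(y_1)\,dy_1$; the resulting right-hand side is then an expectation with respect to the noise distribution $Y_1\sim p$ and can be sampled by the Monte-Carlo scheme described after \cref{e:Lpint}.

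The only point I expect to require genuine care is the final multiply-and-divide step, since the factor $\frac{dp}{p}$ is singular wherever $p$ vanishes. This is not a real obstacle, however: the two operations recombine into the bounded, integrable quantity $dp(y_1)$, so the integral on the right is well-defined exactly as the one on the left, and the $\frac{dp}{p}$ presentation is purely a reinterpretation that makes the formula amenable to sampling $Y_1$ from $p$. Consequently, once \cref{l:local1step} is in hand the argument is essentially bookkeeping, with no analytic difficulty beyond the integrability already guaranteed by \cref{ass:basic}.
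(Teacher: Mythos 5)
Your proposal is correct and follows essentially the same route as the paper's own proof: substitute \cref{l:local1step}, swap the order of integration, change variables to $y_1=x_1-fx_0$, and then multiply and divide by $p(y_1)$ to expose the likelihood-ratio factor. Your added remarks on Fubini's justification and on the harmlessness of the $dp/p$ rewriting are reasonable elaborations that the paper leaves implicit.
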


\begin{remark*}
The point of the likelihood ratio trick is that if we want to integrate the right expression by Monte-Carlo, just generate random pairs of $ \left\{x_{l,0}, y_{l,1}\right\} $, compute $x_{l,1}$ accordingly, and average $I_l := \Phi(x_1)\delta f^\gamma x_{l,0} \cdot \frac {dp}{p}( y_{l,1})$ over many $l$'s.
\end{remark*}

\begin{proof}
  Substitute \Cref{l:local1step} into the integration, we get
\[ \begin{split}
  \int \Phi(x_1) \delta (L_p L_{f^\gamma} h_0) (x_1) dx_1
  = - \iint \Phi(x_1) \delta f^\gamma (x_0) \cdot dp(x_1 - f x_0) \, h_0(x_0) dx_0 dx_1
\end{split} \]
The problem with this expression is that, should we want Monte-Carlo, it is not obvious which measure we should generate $x_1$'s according to.
To solve this problem, change the order of the double integration to get
\[ \begin{split}
  = - \int \delta f^\gamma(x_0)\cdot \left( \int \Phi(x_1) dp(x_1 - f x_0)  dx_1 \right) h_0(x_0) dx_0.
\end{split} \]
Change the dummy variable of the inner integration from $x_1$ to $y_1=x_1-fx_0$, 
\[ \begin{split}
  = - \int \delta f^\gamma(x_0)\cdot \left( \int \Phi(y_1+fx_0) dp(y_1) dy_1 \right) h_0(x_0) dx_0 \\
  = - \iint \Phi(x_1(x_0,y_1))\, \delta f^\gamma(x_0)\cdot \frac {dp}p (y_1) \, p(y_1) dy_1 h_0(x_0) dx_0.
\end{split} \]
Here $x_1$ is a function as stated in the lemma.
\end{proof}

\subsection{An intuitive explanation}
\label{s:intui}

We give an intuitive explanation for \Cref{l:local1step} and \Cref{l:niu}.
This will help us to generalize the method to the foliated situation in \Cref{s:foliation}.
First, we adopt a more intuitive but restrictive notation.
Define $\tf(x,\gamma)$ such that 
\[ \begin{split}
  \tf (f(x), \gamma) = f^\gamma(x).
\end{split} \]
In other words, we write $f^\gamma $ by appending a small perturbative map $\tf$ to $f:=f^{0}$.
Here, $\tf$ is the identity map when $\gamma=0$.
Note that $\tf$ can be defined only if $f$ satisfies 
\[ \begin{split}
  f^\gamma(x) = f^\gamma(x')
  \quad \textnormal{whenever} \quad 
  f(x) = f(x').
\end{split} \]
For example, when $f$ is bijective, then we can well-define $\tf$.
Hence, this new notation is more restrictive than what we used in other parts of this section, but it allows us to see more clearly what happens during the perturbation.

With the new notation, the dynamics can now be written as
\[ \begin{split}
  X_n =  \tf f(X_{n-1}) + Y_n ,
  \quad \textnormal{where} \quad 
  Y_n \iid  p.
\end{split} \]
By this notation, only $f$ changes time step, but $\tf$ and adding $Y$ do not change time.
In this subsection, we shall start from after having applied the map $f$, and only look at the effect of applying $\tf$ and adding noise:
this is enough to account for the essentials of \Cref{l:local1step} and \Cref{l:niu}.
Roughly speaking, the $q_n$ we use in the following is in fact $q_n:=L_f h_{n-1}$;
$q_n$ is the density of $z_n:=fx_{n-1}$, so $z_n+y_n=x_n$, and we omit the subscript for time $0$.

With the new notation, \Cref{l:local1step} is essentially equivalent to
\begin{equation} \begin{split} \label{e:dao}
  \delta (L_p L_{\tf} q) (x)
  = \int -  \delta \tf z \cdot dp(x - z) \, q(z) dz.
\end{split} \end{equation}
We explain this intuitively by \Cref{f:intui}.
Let $\tz:= \tf z$ be distributed according to $\tq$, then $L_p \tq$ is obtained by first attaching a density $p$ to each $\tz$, then integrating over all $\tz$.
$L_p L_\tf q$ is obtained by first moving $z$ to $\tz$ and then performing the same procedure.
Hence, $\delta L_p L_\tf q(x)$ is first computing $\delta p_{\tf z}(x)$ for each $z$, then integrating over $z$.
Let
\[ \begin{split}
   p_{\tf z}(x) := p(x - \tf z)
\end{split} \]
be the density of the noise centered at $\tf z$.
So
\[ \begin{split}
  \delta p_{\tf z}(x)
  = -dp_{\tf z}(x) \cdot \delta \tf z (x)
  = -dp(x - \tf z) \cdot \delta \tf z.
\end{split} \]
Here, $\delta\tf z (x)$ in the middle expression is the horizontal shift of $p_{\tf z}$'s level set previously located at $x$.
Since the entire Gaussian distribution is parallelly moved on the Euclidean space $\R^M$, so $\delta\tf z (x)=\delta\tf z$ is constant for all $x$.
Then we can integrate over $z$ to get \Cref{e:dao}.

\begin{figure}[ht] \centering
  \includegraphics[scale=0.4]{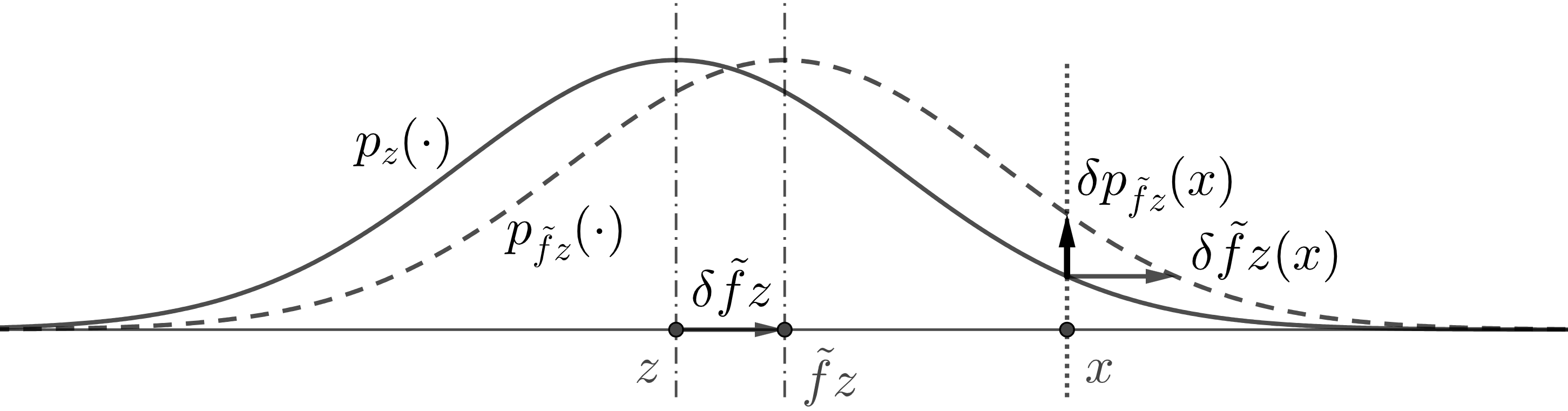}
  \caption{Intuitions for \Cref{l:local1step} and \Cref{l:niu}.}
  \label{f:intui}
\end{figure}

Using $\tf$ notation, \Cref{l:niu} is equivalent to
\[ \begin{split}
   \delta \int \Phi(x)  (L_p L_\tf q) (x) dx
  = \iint - \Phi(x) \delta \tf (z)\cdot \frac {dp(y)}{p(y)} \, q(z) dz \, p(y) dy,
\end{split} \]
where $x$ on the right is a function $x=z+y$.
Intuitively, this says that the left side equals to first compute
$\delta \int \Phi(z+y) p_{\tf z}(y) dy $
for each $z$, then integrate over $z$.
The integration on the left uses $x$ as the dummy variable, which is convenient for the transfer operator formula above.
But it does not involve a density for $x$, so it is not immediately ready for Monte-Carlo.
The right integration is over $q(z)$ and $p(y)$, which are easy to sample.

It is important to differentiate only $p$ but not $f$.
In fact, our core intuitions are completely at the same time point, and do not even involve $f$.
Hence, we can easily generalize to the cases where $f$ is bad, for example, when $f$ is not bijective or when $f$ is not hyperbolic: these are all difficult for deterministic systems.

\subsection{Kernel-differentiation over finitely many steps}
\label{s:manystep}

We use \Cref{l:niu} to get integrated formulas for $ h_T:=  ( L_p L_{f^\gamma})^T h_0$ and its perturbation $\delta h_T$, which can be sampled by Monte-Carlo type algorithms.
Note that here $h_0$ is fixed and does not depend on $\gamma$.
Here, the formula for $\delta h_T$ was previously known as the likelihood ratio method or the Monte-Carlo gradient method and was given in \cite{Rubinstein1989,Reiman1989,Glynn1990}.
The magnitude of the integrand increases as $O(\sqrt{T})$ after centralization, where $T$ is the number of time steps.

\begin{lemma} \label{l:PhiMC}
Under \Cref{ass:basic}, for any $C^2$ (not necessarily positive) densities $h_0$, any $n\in \N$,
  \[ \begin{split}
    \int \Phi(x_n) ((L_pL_f)^n h_0) (x_n) dx_n
  = \int \Phi(x_n) h_0(x_0) dx_0 p(y_1) dy_1 \cdots p(y_n) dy_n.
  \end{split} \]
Here the $x_n$ on the left is a dummy variable; whereas $x_n$ on the right is recursively defined by $x_m=f(x_{m-1})+y_m$, so $x_n$ is a function of the dummy variables $x_{0}, y_{1}, \cdots, y_{n}$.
\end{lemma}

\begin{remark*}
  The integrated formula is straightforward to compute by Monte-Carlo method.
  That is, for each $l$, we generate random $x_{l,0}$ according to density $h_0$, and $y_{l,1}, \cdots, y_{l,n}$ i.i.d. according to $p$.
  Then we compute $\Phi(x_n)$ for this particular sample orbit of $ \left\{x_{l,0}, y_{l,1}, \cdots, y_{l,n} \right\} $.
  Note that the experiments for different $l$ should be independent or decorrelated.
  Then the Monte-Carlo integration is simply
\[ \begin{split}
  \int \Phi(x_n) ((L_pL_f)^n h_0) (x_n) dx_n
  = \lim_{L\rightarrow\infty} \frac 1L \sum_{l=1}^L \Phi(x_{l,n}).
\end{split} \]
Almost surely according to $h_0\times p_1\times\cdots\times p_n$.
In this paper, we shall use $n, m$ to indicate time steps, whereas $l$ labels different samples.
\end{remark*}

\begin{proof}
Sequentially apply the definition of $L_p$ and $L_f$, we get
\begin{equation} \begin{split} \label{e:year}
    \int \Phi(x_n) ((L_pL_f)^n h_0) (x_n) dx_n \\
    = \iint \Phi(y_n+z_n) (L_f(L_pL_f)^{n-1} h_0) (z_n) p(y_n) dy_n dz_n \\
    = \iint \Phi(x_n(x_{n-1},y_n)) ((L_pL_f)^{n-1} h_0) (x_{n-1}) p(y_n) dy_n dx_{n-1}.
\end{split} \end{equation}
  Here, $x_n:=y_n+f(x_{n-1})$ in the last expression is a function of the dummy variables $x_{n-1}$ and $y_n$.
  Roughly speaking, the dummy variable $z_n$ in the second expression is $f(x_{n-1})$.
  
  Recursively apply \Cref{e:year} once, we have 
  \[ \begin{split}
    \int \left(\int \Phi(x_n(x_{n-1},y_n)) ((L_pL_f)^{n-1} h_0) (x_{n-1}) dx_{n-1} \right) p(y_n) dy_n\\
    =
    \int \Phi(x_n(x_{n-1}(x_{n-2},y_{n-1}),y_n)) ((L_pL_f)^{n-2} h_0) (x_{n-2}) p(y_{n-1})p(y_n) dy_{n-1} dy_n dx_{n-2}.
  \end{split} \]
  Here $x_{n-1}:=y_{n-1}+f(x_{n-2})$, so $x_n$ is a function of the dummy variables $x_{n-2},y_{n-1}$, and $y_n$.
  Keep applying \Cref{e:year} recursively to prove the lemma.
\end{proof}

\begin{theorem} [kernel-differentiation formula for $\delta h_T$] \label{l:finiteT}
Under \Cref{ass:basic}, then
\[ \begin{split}
  \delta \int \Phi(x_T) h_T(x_T) dx_T
  = - \int \Phi(x_T) \sum_{m=0} ^ {T-1} \left( \delta f^\gamma x_{m} \cdot \frac {dp}{p} (y_{m+1}) \right) h_0(x_0) dx_0 (pdy)_{1\sim T}.
\end{split} \]  
Here $dp$ is the differential of $p$, $(pdy)_{1\sim T}:=p(y_1)dy_1\cdots p(y_T)dy_T$ is the independent distribution of $Y_n$'s, and $x_m$ is a function of dummy variables $x_0, y_1, \cdots, y_m$.
\end{theorem}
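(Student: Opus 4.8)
The plan is to reduce the $T$-step statement to the one-step \cref{l:niu} by way of the Leibniz expansion \cref{e:xu} together with the collapse identity \cref{l:PhiMC}. Writing $\delta h_T = \sum_{n=0}^{T-1}(L_pL_f)^n\,\delta(L_pL_{f_\gamma})\,(L_pL_f)^{T-1-n}h_0$, I would integrate each summand against $\Phi$ separately; call the $n$-th contribution $A_n$, so that the left-hand side equals $\sum_{n=0}^{T-1}A_n$ once we justify pushing $\delta$ through the integral. The idea is that in the operator word defining $A_n$ exactly one factor is perturbed, sitting at time position $T-n$, with $n$ clean steps after it and $T-1-n$ clean steps before it; the two blocks of clean steps are handled by \cref{l:PhiMC} and the single perturbed step by \cref{l:niu}.

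Concretely, for fixed $n$ I would first set $\rho := \delta(L_pL_{f_\gamma})(L_pL_f)^{T-1-n}h_0$, a signed $C^2$ density, and apply \cref{l:PhiMC} with this $\rho$ in place of $h_0$ and $n$ forward steps; since the remark makes \cref{l:PhiMC} available for not-necessarily-positive densities, this yields $A_n = \int \Psi(x_{T-n})\,\rho(x_{T-n})\,dx_{T-n}$, where $\Psi(x_{T-n}) := \int \Phi(x_T)\,p(y_{T-n+1})dy_{T-n+1}\cdots p(y_T)dy_T$ is the observable obtained by averaging $\Phi$ over the trailing noises, with $x_T$ the iterate of $x_{T-n},y_{T-n+1},\dots,y_T$. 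Next, writing $g := (L_pL_f)^{T-1-n}h_0$ and applying \cref{l:niu} with $\Psi$ as the observable and $g$ as the initial density, I obtain $A_n = -\iint \Psi(x_{T-n})\,\delta f_\gamma(x_{T-n-1})\cdot \tfrac{dp}{p}(y_{T-n})\,g(x_{T-n-1})\,dx_{T-n-1}\,p(y_{T-n})dy_{T-n}$, with $x_{T-n}=y_{T-n}+fx_{T-n-1}$. Finally I would collapse the leading block by applying \cref{l:PhiMC} once more, now to $g=(L_pL_f)^{T-1-n}h_0$ with the bracketed expression read as an observable of $x_{T-n-1}$; this expresses the remaining $g$-integral as an integral against $h_0(x_0)dx_0\,p(y_1)dy_1\cdots p(y_{T-n-1})dy_{T-n-1}$, with $x_{T-n-1}$ the iterate of the leading variables. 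Unfolding $\Psi$ and substituting $m=T-1-n$ then gives $A_n = -\int \Phi(x_T)\,\delta f_\gamma(x_m)\cdot\tfrac{dp}{p}(y_{m+1})\,h_0(x_0)dx_0\,(pdy)_{1\sim T}$, and summing over $m=0,\dots,T-1$ is exactly the claim.

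The main obstacle I anticipate is not any single estimate but the bookkeeping that keeps the three integration blocks consistent: I must make sure the Markov composition $x_T = f(x_{T-1})+y_T$, $\dots$, $x_1=f(x_0)+y_1$ is the \emph{same} iterate throughout, so that the $\Psi$ coming out of the trailing collapse, the single step supplied by \cref{l:niu}, and the leading collapse all refer to one coherent trajectory $(x_0,y_1,\dots,y_T)$; a clean relabeling of dummy variables at each stage is where errors would creep in. Secondary points to discharge, all living under \cref{ass:basic}, are that $\rho$ is genuinely a signed $C^2$ density so \cref{l:PhiMC} applies, that $\Psi$ is $C^2$ in $x_{T-n}$ (it is, since convolution with the $C^2$ kernel $p$ and composition with the $C^2$ map $f$ preserve the needed regularity) so that \cref{l:niu} may use it as observable, and that the bounded $C^2$ norms legitimize interchanging $\delta$ with the finite sum and with the integrals, which is what lets us split $\delta\int\Phi h_T$ into $\sum_n A_n$ in the first place.
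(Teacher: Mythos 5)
Your proposal is correct and follows essentially the same route as the paper's own proof: expand via the Leibniz rule \cref{e:xu}, collapse the trailing clean block with \cref{l:PhiMC}, handle the single perturbed step with \cref{l:niu}, collapse the leading block with \cref{l:PhiMC} again, and sum. Your explicit naming of the averaged observable $\Psi$ and the signed density $\rho$ is only a presentational refinement of what the paper does implicitly by tracking $x_T$ as a function of the intermediate dummy variables.
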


\begin{remark*}
Here, the benefit of the likelihood ratio trick is more obvious: it recovers the measure under which we integrate to the original probability measure on orbits.  
\end{remark*}

\begin{proof}
 Note that $\Phi$ is fixed, use \Cref{e:xu}, and let $m=T-n-1$, we get
\[ \begin{split}
  \delta \int \Phi(x_T) h_T(x_T) dx_T
  = \int \Phi(x_T) (\delta h_T) (x_T) dx_T \\
  = \int \Phi(x_T) \sum_{m=0}^{T-1} \left( ( L_p L_f )^{T-m-1} \, \delta(L_p L_{f^\gamma} ) \, (L_p L_f )^{m} h_0 \right) (x_T) dx_T.
\end{split} \]

For each $m$, first apply \Cref{l:PhiMC} several times,
\[ \begin{split}
  \int \Phi(x_T) \left( ( L_p L_f )^{T-m-1} \, \delta(L_p L_{f^\gamma} ) \, (L_p L_f )^{m} h_0 \right) (x_T) dx_T \\
  = \int \Phi(x_T) \left( \delta(L_p L_{f^\gamma} ) \, (L_p L_f )^{m} h_0 \right) (x_{m+1}) \, dx_{m+1} (pdy)_{m+2\sim T} 
\end{split} \]
Here $x_T$ 
is a function of $x_{m+1}, y_{m+2}, \cdots, y_T$.
Then apply \Cref{l:niu} once,
\[ \begin{split}
  = - \int \Phi(x_T) \delta f^\gamma x_m \cdot \frac {dp}p (y_{m+1}) \left( (L_p L_f )^{m} h_0 \right) (x_{m}) \, dx_{m} (pdy)_{m+1\sim T} 
\end{split} \]
Now $x_T$ is a function of $x_{m}, y_{m+1}, \cdots, y_T$.
Then apply \Cref{l:PhiMC} several times again, 
\[ \begin{split}
  = - \int \Phi(x_T) \delta f^\gamma x_m \cdot \frac {dp}p (y_{m+1}) h_0 (x_0) \, dx_0 (pdy)_{1\sim T} 
\end{split} \]
Then sum over $m$ to prove the theorem.
\end{proof}

Note that subtracting any constant from $\Phi$ does not change the linear response: we prove a more detailed statement in \Cref{l:int0}.
Hence, we can centralize $\Phi$, that is, replacing $\Phi(\cdot)$ by $\Phi(\cdot)-\Phi_{avg,T}$, where the constant
\[ \begin{split}
  \Phi_{avg,T} := \int \Phi(x_T) h_T(x_T) dx_T.
\end{split} \]
We sometimes centralize by subtracting $\Phi_{avg}:= \int \Phi h dx$.
The centralization reduces the amplitude of the integrand, so the Monte-Carlo method converges faster:
this is also a known result and was discussed, for example, in \cite{GlOl19}.

\begin{lemma}[free centralization]
\label{l:int0}
Under \Cref{ass:basic}, for any $m\ge0$, we have
\[ \begin{split}
  \int  \delta f^\gamma x_{m} \cdot \frac {dp}{p} (y_{m+1})  h_0(x_0) dx_0 (pdy)_{1\sim m+1}
  =
  0.
\end{split} \] 
\end{lemma}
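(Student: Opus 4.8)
The plan is to isolate the integration over the final noise variable $y_{m+1}$ and show that it already vanishes on its own. The key structural observation is that $x_m$ is built recursively from the dummy variables $x_0, y_1, \ldots, y_m$ alone (via $x_j = f(x_{j-1}) + y_j$), so the vector $\delta f_\gamma x_m$ does \emph{not} depend on $y_{m+1}$. Consequently the variable $y_{m+1}$ enters the integrand only through the covector $\frac{dp}{p}(y_{m+1})$ and the accompanying measure factor $p(y_{m+1})\,dy_{m+1}$. This is the same bookkeeping of "which variable appears where" that drives the proofs of \cref{l:niu} and \cref{l:PhiMC}.

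First I would invoke Fubini (justified by the bounded $C^2$ norms and integrability of the densities in \cref{ass:basic}) to carry out the $y_{m+1}$ integration innermost, pulling the $x_m$-dependent vector outside the inner integral:
\[ \begin{split}
\int \delta f_\gamma x_m \cdot \frac{dp}{p}(y_{m+1})\, h_0(x_0)\, dx_0\, (pdy)_{1\sim m+1} \\
= \int \delta f_\gamma x_m \cdot \left( \int \frac{dp}{p}(y_{m+1})\, p(y_{m+1})\, dy_{m+1} \right) h_0(x_0)\, dx_0\, (pdy)_{1\sim m}.
\end{split} \]
The inner integral simplifies via the identity $\frac{dp}{p}(y)\, p(y) = dp(y)$, reducing it to $\int_{\R^M} dp(y)\, dy$, the integral over all of $\R^M$ of the differential (equivalently, the gradient $\nabla p$) of the noise density.

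It then remains to show that this inner integral is the zero covector. Componentwise, $\int_{\R^M} \partial_{y_i} p \, dy = 0$: integrating first in the $y_i$ direction by the fundamental theorem of calculus produces the boundary values $p(\cdots,+\infty,\cdots) - p(\cdots,-\infty,\cdots)$, and these vanish because $p$ is an $L^1$ probability density with bounded derivatives, hence decays to zero along each coordinate axis. Since the inner integral is the zero covector, its pairing with $\delta f_\gamma x_m$ is zero pointwise, and the entire expression vanishes; the argument is uniform in $m$ (including $m=0$, where $x_m = x_0$). The only point genuinely requiring care is the vanishing of the boundary terms at infinity, i.e.\ that $\int_{\R^M} \nabla p \, dy = 0$, and this is exactly where \cref{ass:basic} (bounded $C^2$ norms) together with $p$ being an integrable probability density is used. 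I expect this mild decay-at-infinity justification to be the main obstacle; the rest is the reordering of integration already familiar from the earlier lemmas.
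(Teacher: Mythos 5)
Your proposal is correct and follows essentially the same route as the paper's proof: both isolate the $y_{m+1}$ integration (using that $x_m$ depends only on $x_0, y_1,\dots,y_m$), cancel $\frac{dp}{p}\cdot p = dp$, and conclude $\int dp(y)\,dy = 0$ from the decay of $p$ at infinity guaranteed by its bounded $C^2$ norm and unit integral. No gaps.
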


\begin{proof}
First note that a density function $p$ on $\R^M$ with a bounded $C^2$ norm must have $\lim_{y\rightarrow\infty}p(y)=0$, since otherwise its integration cannot be one.

Then notice that $x_m$ is a function of dummy variables $x_0, y_1, \cdots, y_m$, so we can first integrate
\[ \begin{split}
  \delta f^\gamma x_{m} \cdot \int \frac {dp}{p} (y_{m+1})  p(y_{m+1}) dy_{m+1}
  = \delta f^\gamma x_{m} \cdot \int dp (y_{m+1}) dy_{m+1}
  = 0.
\end{split} \]
Since $p(y)\rightarrow 0$ as $y\rightarrow\infty$.
Here, $dp$ is the differential of the function $p$, whereas $dy$ indicates the integration.
\end{proof}

For convenience in computer coding, we explicitly rewrite this theorem in a centralized and time-inhomogeneous form, where $\Phi$, $f$, and $p$ are different for each step.
This is the setting for many applications, such as finitely deep neural networks.
The following proposition can be proved similarly to our previous proofs.
Note that we can reuse a lot of data should we also care about the perturbation of the averaged $\Phi_n$ for other layers $1\le n\le T$.

In the following proposition, let $\Phi_T:\R^{M_T}\rightarrow\R$ be the observable function defined on the last layer of dynamics.
Let $h_T$ be the pushforward measure given by the dynamics in \Cref{e:dynamicinhomo}, that is, defined recursively by $h_n:=L_{p_n} L_{f^\gamma_{n-1}} h_{n-1}$;
Let $(pdy)_{1\sim T}:=p_1(y_1)dy_1\cdots p_T(y_T)dy_T$ be the independent but not necessarily identical distribution of $Y_n$'s.

\begin{proposition} [centralized kernel-differentiation formula for $\delta h_T$ of time-inhomogeneous systems] \label{l:dhTn}
If $f_{\gamma,n}$ and $p_n$ satisfy \Cref{ass:basic} for all $n$, then
\[ \begin{split}
  &\delta \int \Phi_T(x_T) h_T(x_T) dx_T
  \\
  = &- \int \left(\Phi_T(x_T) - \Phi_{avg,T} \right) \sum_{m=0} ^ {T-1} \left( \delta f_{\gamma,m} x_{m} \cdot \frac {dp}{p} (y_{m+1}) \right) h_0(x_0) dx_0 (pdy)_{1\sim T}.
\end{split} \]  
Here $\Phi_{avg,T} := \int \Phi_T h_T $.
\end{proposition}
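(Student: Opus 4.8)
The plan is to follow the proof of \cref{l:finiteT} essentially verbatim, carrying along the step-dependent data $f_{\gamma,n}$ and $p_n$, and then to append a centralization step justified by \cref{l:int0}. First I would expand $h_T=L_{p_T}L_{f_{\gamma,T-1}}\cdots L_{p_1}L_{f_{\gamma,0}}h_0$ and apply the Leibniz rule exactly as in \cref{e:xu} to write $\delta h_T=\sum_{m=0}^{T-1} A_m\,\delta(L_{p_{m+1}}L_{f_{\gamma,m}})\,B_m h_0$, where $A_m:=L_{p_T}L_{f_{T-1}}\cdots L_{p_{m+2}}L_{f_{m+1}}$ and $B_m:=L_{p_m}L_{f_{m-1}}\cdots L_{p_1}L_{f_0}$ are the trailing and leading blocks evaluated at $\gamma=0$, so that $B_m h_0=h_m$. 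The derivative $\delta$ lands only on $f_{\gamma,m}$ because the densities $p_n$ do not depend on $\gamma$.

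For each fixed $m$ I would integrate the corresponding term against $\Phi_T$ and process it in three stages, exactly as in \cref{l:finiteT}. First, apply the time-inhomogeneous analogue of \cref{l:PhiMC} to the trailing block $A_m$, turning those operators into a Monte-Carlo integration over $y_{m+2},\dots,y_T$ and leaving $\Phi_T(x_T)$ with $x_T$ expressed as a function of $x_{m+1}$ and the later noises. Second, apply the inhomogeneous version of \cref{l:niu} to the single perturbed operator $\delta(L_{p_{m+1}}L_{f_{\gamma,m}})$; this produces the factor $-\,\delta f_{\gamma,m}x_m\cdot\frac{dp_{m+1}}{p_{m+1}}(y_{m+1})$ and replaces the dummy $x_{m+1}$ by the pair $(x_m,y_{m+1})$. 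Third, apply \cref{l:PhiMC} once more to the leading block $B_m$ to unfold $h_m(x_m)\,dx_m$ back into $h_0(x_0)\,dx_0\,(pdy)_{1\sim m}$. Summing over $m$ then yields the statement with $\Phi_T$ in place of $\Phi_T-\Phi_{avg,T}$, i.e. the time-inhomogeneous analogue of \cref{l:finiteT}.

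Finally I would centralize. Replacing $\Phi_T$ by $\Phi_T-\Phi_{avg,T}$ changes the right-hand side only by the constant $\Phi_{avg,T}$ times $\int\sum_{m=0}^{T-1}\bigl(\delta f_{\gamma,m}x_m\cdot\frac{dp_{m+1}}{p_{m+1}}(y_{m+1})\bigr)h_0(x_0)\,dx_0\,(pdy)_{1\sim T}$, and each summand here vanishes by the inhomogeneous version of \cref{l:int0}: since $x_m$ is a function of $x_0,y_1,\dots,y_m$ only, one integrates $y_{m+1}$ first to obtain $\int dp_{m+1}(y_{m+1})\,dy_{m+1}=0$ (using that $p_{m+1}\to0$ at infinity), while the remaining noise integrations contribute factors of $1$. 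Hence the centralization is free and the proposition follows.

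I expect the main obstacle to be organizational rather than conceptual: one must check that \cref{l:PhiMC}, \cref{l:niu}, and \cref{l:int0}, all of which are proved purely by one-step transfer-operator manipulations that never use homogeneity, transfer verbatim to the setting in which $f$ and $p$ vary with the step, and one must keep the index bookkeeping of the blocks $A_m$ and $B_m$ consistent throughout. The notation $\frac{dp}{p}(y_{m+1})$ appearing in the statement should be read as $\frac{dp_{m+1}}{p_{m+1}}(y_{m+1})$, the density active at step $m+1$.
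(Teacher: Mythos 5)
Your proposal is correct and matches the paper's intent exactly: the paper omits the proof, remarking only that the proposition ``can be proved similarly to our previous proofs,'' and your argument is precisely that --- rerun the proof of \cref{l:finiteT} with step-dependent $f_{\gamma,n}$, $p_n$ (Leibniz rule, then \cref{l:PhiMC}, \cref{l:niu}, \cref{l:PhiMC} for each term), and obtain the centralization for free from the inhomogeneous version of \cref{l:int0}. Your index bookkeeping for the blocks $A_m$, $B_m$ and your reading of $\tfrac{dp}{p}(y_{m+1})$ as $\tfrac{dp_{m+1}}{p_{m+1}}(y_{m+1})$ are both consistent with the paper.
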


\subsection{Kernel-differentiation over infinitely many steps} 
\label{s:infinite}

For the perturbation of physical measures, the kernel-differentiation formula for $\delta h$ can take the form of a long-time average on an orbit.
This reduces the magnitude of the integrand, so the convergence is faster.
Note that this subsection and the next require that the dynamics is time-homogeneous ($f$ and $p$ are the same for each step) for the existence of physical measures and for invoking the ergodic theorem.

\begin{lemma}[kernel-differentiation formula for physical measures]
\label{l:kd4h}
Under \Cref{ass:basic,ass:hdh}, 
\[ \begin{split}
\delta \int \Phi(x) h(x) dx
= \int \Phi(x) \delta h (x) dx
\\
= - \lim_{W\rightarrow\infty} \sum_{n=1} ^ {W} \int \Phi(x_{n}) \left( \delta f^\gamma x_{0} \cdot \frac {dp}{p} (y_{1}) \right) h^{ 0}(x_0) dx_0 (pdy)_{1\sim n}
\end{split} \]
\end{lemma}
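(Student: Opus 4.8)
The plan is to derive \cref{l:kd4h} from the finite-time formula \cref{l:finiteT} via a single well-chosen initial condition, a time-shift re-indexing, and one limit interchange. Concretely, I would run the dynamics starting from the unperturbed physical density, i.e. take $h_0 = h_{\gamma=0}$ in \cref{l:finiteT}. By \cref{ass:hdh}(1) together with the weak$*$ convergence defining the physical measure, $\int \Phi h_{T,\gamma} \to \int \Phi h_\gamma$ for each $\gamma$ near $0$, so that $\delta\int\Phi h = \delta\lim_{T\to\infty}\int\Phi h_{T,\gamma}$. The remaining task is to move $\delta$ inside the limit and to recognize the resulting finite sum as a truncation of a convergent series of correlations.

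For each fixed $T$, \cref{l:finiteT} gives $\delta\int\Phi h_T = -\int\Phi(x_T)\sum_{m=0}^{T-1}\bigl(\delta f_\gamma x_m\cdot\tfrac{dp}{p}(y_{m+1})\bigr)h_{\gamma=0}(x_0)\,dx_0\,(pdy)_{1\sim T}$, where every quantity except $\delta f_\gamma$ is evaluated at $\gamma=0$. The key observation is that $h_{\gamma=0}$ is invariant under the $\gamma=0$ dynamics, so the process $\{X_k\}$ started from it is stationary. Hence each summand, which couples the observable at time $T$ with the kernel $\delta f_\gamma x_m\cdot\tfrac{dp}{p}(y_{m+1})$ built from $X_m,Y_{m+1}$, depends only on the lag $T-m$: shifting all indices back by $m$ and integrating out the burn-in variables $x_0,y_1,\dots,y_m$ (legitimate because $X_m\sim h_{\gamma=0}$ and the $Y$'s are i.i.d.\ and independent of $X_m$) turns the $m$-th term into the lag-$(T-m)$ term $\int\Phi(x_n)\bigl(\delta f_\gamma x_0\cdot\tfrac{dp}{p}(y_1)\bigr)h_{\gamma=0}(x_0)\,dx_0\,(pdy)_{1\sim n}$ with $n=T-m$. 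Re-indexing by $n$ converts $\sum_{m=0}^{T-1}$ into $\sum_{n=1}^{T}$, which already matches the claimed summand.

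It then remains to send $T\to\infty$. Here I would first use the centralization identity \cref{l:int0} with $m=0$ to note that the kernel $\psi(x_0,x_1):=\delta f_\gamma x_0\cdot\tfrac{dp}{p}(y_1)$ has zero mean, so each lag-$n$ term equals the centered correlation $\E[\Phi(X_n)\psi(X_0,X_1)]-\E[\Phi]\E[\psi]$ appearing in \cref{ass:hdh}(2). Consequently $\sum_{n=1}^{T}$ converges absolutely as $T\to\infty$, and the limit of the right-hand side is exactly the series claimed in \cref{l:kd4h}.

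The main obstacle is justifying the interchange $\delta\lim_{T\to\infty}=\lim_{T\to\infty}\delta$ of the parameter derivative with the time limit. For this I would invoke the standard theorem that pointwise convergence of $g_T(\gamma):=\int\Phi h_{T,\gamma}$ together with uniform-in-$\gamma$ convergence of the derivatives $\tfrac{d}{d\gamma}g_T(\gamma)$ on a neighborhood of $0$ forces $\tfrac{d}{d\gamma}\lim_T g_T=\lim_T\tfrac{d}{d\gamma}g_T$. The pointwise convergence is \cref{ass:hdh}(1), while the required uniform convergence of derivatives is precisely the ``uniformly in terms of $\gamma$'' clause of \cref{ass:hdh}(2), since for each $\gamma$ the derivative $\tfrac{d}{d\gamma}g_T(\gamma)$ is a partial sum of the correlation series controlled there. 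Care is needed because the stationarity re-indexing is exact only at $\gamma=0$; away from the base point one cannot rely on invariance and must instead bound the correlation sums directly, which is exactly why \cref{ass:hdh}(2) is posed uniformly in $\gamma$ rather than only at $\gamma=0$.
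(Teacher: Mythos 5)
Your proposal is correct and follows essentially the same route as the paper's own proof: start from $h_{\gamma=0}$ in \cref{l:finiteT}, use stationarity to re-index each summand by the lag $n=T-m$, invoke \cref{l:int0} to identify each term as a centered correlation, and use the uniform convergence in \cref{ass:hdh} to interchange $\delta$ with the $T\to\infty$ limit. Your added care about the derivative/limit interchange and the fact that stationarity is only exact at $\gamma=0$ simply makes explicit what the paper's last sentence compresses.
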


\begin{proof}
By \Cref{ass:hdh}, we can start from initial density $h_0 = h^0$, apply the perturbed dynamics $f^{\gamma}$ many times, and 
$h_T:= ( L_p L_{f^\gamma})^T h^{ 0}$ would converge to $h^\gamma$.

By \Cref{l:finiteT}, for any $T$,
\[ \begin{split}
  \delta \int \Phi(x_T) h_T(x_T) dx_T
  = -  \sum_{m=0} ^ {T-1} \int \Phi(x_T) \left( \delta f^\gamma x_{m} \cdot \frac {dp}{p} (y_{m+1}) \right) h^{0}(x_0) dx_0 (pdy)_{1\sim T}.
\end{split} \]
Note that $h^{0}$ is the invariant measure for $\gamma=0$, so the expression
\[ \begin{split}
  = -  \sum_{m=0} ^ {T-1} \int \Phi(x_{T-m}) \left( \delta f^\gamma x_{0} \cdot \frac {dp}{p} (y_{1}) \right) h^{0}(x_0) dx_0 (pdy)_{1\sim T-m}
\end{split} \]
Passing $T-m$ to $n$, 
\[ \begin{split}
  = - \sum_{n=1} ^ {T} \int \Phi(x_{n}) \left( \delta f^\gamma x_{0} \cdot \frac {dp}{p} (y_{1}) \right) h^0(x_0) dx_0 (pdy)_{1\sim n}
\end{split} \]
Note that this is the correlation function between $\Phi(x_{n})$ and $\delta f^\gamma x_{0} \cdot \frac {dp}{p} (y_{1})$.
By \Cref{l:int0}, $ \E[\delta f^\gamma X_{0} \cdot \frac {dp}{p} (Y_{1})]=0$.
By \Cref{ass:hdh}, the derivatives expressed by the above summations uniformly converge to the expression in the lemma.
Hence, the limit equals $\delta h$.
\end{proof}

\ree{
Compared to \cite{GG19}, \Cref{l:kd4h} added a `likelihood ratio' trick (or rather an observation): divide and multiply by $p(y_{1})$.
Hence, the probability under which our formula is integrated, $h^{ 0}(x_0) dx_0 (pdy)_{1\sim n}$, is the same as the probability of sample paths of the original system.
This is convenient for Monte-Carlo type numerics, since we only need to compute certain expression (the integrand) on each sample path, then take average.
}

\lil{sun}
\ree{
More specifically, to generate a sample of the integrand, 
$\Phi(x_{n}) \left( \delta f^\gamma x_{0} \cdot \frac {dp}{p} (y_{1}) \right)$,
we first generate sample $X_0, Y_1, Y_2,\cdots \sim h\times p\times p \times \cdots$, then compute $X_1, \ldots X_n$ according to the dynamic, and then compute a sample of the integrand.
Thus, we have one sample of the integrand distributed according to the probability measure in our linear response formula.
Monte-Carlo only means to compute many samples and take the average.
}

\ree{
Note that we need to know expression of the `likelihood ratio' of the kernel, $dp_n/p_n$, to compute the integrand.
For Gaussian, this expression is in \Cref{e:ttbm}.
On the other hand, $h_n$ is not differentiated and can be sampled by the dynamics, so we do not need its expression.
Indeed, differentiating $h_n$ is difficult, since it must be influenced by the dynamics, so its derivative should involve information from even earlier steps.
For example, the equivariant divergence formula \cite{TrsfOprt,vdivF,fr} gives an expression of the derivative of $h_n$, which depends on quantities from previous time steps.
But the benefit of the kernel-differentiation method is that $\delta Lh$ can be resolved in one step, and the derivative hits $p_n$ but not $h_n$.
}

\subsection{Ergodic kernel-differentiation formula} 
\label{s:ergodic}

We can apply the ergodic theorem, forget the details of the initial distribution, and sample $\delta h$ by one long orbit.

\goldbach*

\begin{remark*}
    Recall that $W$ is determined by the rate of decay of correlations.
    $L$ is the total number of steps.
    Typically $W\ll L$ in numerics.
\end{remark*}

\begin{proof}

  Since $h$ is invariant ergodic for the dynamic $X_{n+1}=f(X_n)+Y_{n+1}$, if $X_0\sim h$, then $\{X_l, Y_{l+1},\cdots,Y_{l+n}\}_{l\ge0}$ is a stationary process, so we can apply Birkhoff's ergodic theorem (the version for stationary processes), 
\[ \begin{split}
\int \Phi(x_{n}) \left( \delta f^\gamma x_{0} \cdot \frac {dp}{p} (y_{1}) \right) h^0(x_0) dx_0 (pdy)_{1\sim n}
\\ \overset{\mathrm{a.s.}}{=}
\lim_{L\rightarrow\infty} \frac 1L \sum_{l=1}^L \Phi(X_{n+l}) \, \delta f^\gamma X_l \cdot \frac{dp}p(Y_{1+l}) .
\end{split} \]
  By substitution we have
  \[ \begin{split}
    \delta \int \Phi(x) h(x) dx
    \,\overset{\mathrm{a.s.}}{=}\, 
    \lim_{W\rightarrow\infty} \lim_{L\rightarrow\infty} - \sum_{n=1}^W \frac 1L \sum_{l=1}^L \Phi(X_{n+l}) \, \delta f^\gamma X_l\cdot \frac{dp}p(Y_{1+l})
  \end{split} \]
  Then we can rerun the proof after centralizing $\Phi$.
\end{proof}

\lil{ma}
\ree{
With our results worked out, in the rest of this subsection, we explain its improvement over the previous likelihood ratio method \cite{Rubinstein1989,Reiman1989,Glynn1990,PSW23}.
}
The previous likelihood ratio method computes the linear response of physical measure by 
\[ \begin{split}
\delta \int \Phi(x) h(x) dx
= - \lim_{T\rightarrow\infty} \sum_{n=0} ^ {T-1} \int \Phi(x_{T}) \left( \delta f^\gamma x_{n} \cdot \frac {dp}{p} (y_{n+1}) \right) h_0 (x_0) dx_0 (pdy)_{1\sim n}
\end{split} \]
Here $h_0$ is any density function.
Their result differs from \Cref{l:kd4h}, which uses the physical measure $h$ for $\gamma=0$.
This seemingly subtle difference causes a problem for Monte-Carlo algorithm: Since $h_0$ is not stationary, they can not use the ergodic theorem as we did in \Cref{t:dh}.
Rather, the previous likelihood ratio method uses
\[ \begin{split}
\delta \int \Phi(x) h(x) dx
\approx
- \frac 1L \sum_{l=1} ^ {L} \sum_{n=0} ^ {T-1} \Phi(X_{l,T}) \left( \delta f^\gamma X_{l,n} \cdot \frac {dp}{p} (Y_{l,n+1}) \right)
\end{split} \]
For some large $L$.
Here $l\in[1,L]$ labels the different sample paths, and $X_0\sim h_0$.

The problem of using their expression is that we need $T$ to be larger than some $T'$ so that $h_T\approx h$.
Roughly speaking, for each path of length $T$, only at the end of the path is the statistic useful.
So we compute $T$ many steps to obtain \textit{one} data for the average over $L$.
In comparison, in our \Cref{t:dh}, we obtain one data for each step we compute, since we only have one path from the stationary distribution.

Moreover, we need $T$ to be larger than some $W'$ so that the decorrelation between $\Phi(x_T)$ and $\delta f^\gamma x_{n} \cdot \frac {dp}{p} (y_{n+1})$ takes effect.
When $T' > W'$, then the integrand in the likelihood ratio method involves the sum of $T>T'>W'$ terms.
In contrast, in our \Cref{t:dh}, we only require $T> W'$. 
Hence, the size of our integrand is smaller, so we need fewer samples to compute the expectation.

In summary, we generate samples more efficiently and sometimes require fewer samples than the previous likelihood ratio method.

\subsection{Further generalizations}
\label{s:generalize}

\subsubsection{\texorpdfstring{$p$}{p} depends on \texorpdfstring{$z$}{z} and \texorpdfstring{$\gamma$}{gamma}}
\hfill\vspace{0.1in}
\label{s:pgammaz}

Our pictorial intuition does not care whether $p$ depends on $\gamma$ or $z$. So we can generalize all of our results to the case
\[ \begin{split}
  X_{n+1} = f(X_n) +Y_{n+1},
  \quad \textnormal{where} \quad 
  \Pro(Y_{n+1}=y \, | \, f(X_n)=z) = p^\gamma_{z}(y).
\end{split} \]
We write these formulas without repeating the proofs.

\Cref{e:Lppt} and \Cref{e:Lpint} become
\[ \begin{split}
  L_{p^\gamma} q (x) 
  = \int q(x-y) p^\gamma_{x-y}(y) dy
  = \int q(z) p^\gamma_{z}(x-z) dz.
  \\
  \int \Phi (x) L_{p^\gamma} q (x) dx
  = \iint \Phi(y+z) q(z) p^\gamma_{z}(y) dz dy.
\end{split} \]
\Cref{l:PhiMC} becomes
  \[ \begin{split}
    \int \Phi(x_n) ((L_{p^\gamma} L_{f^\gamma})^n h_0) (x_n) dx_n
  = \int \Phi(x_n) h_0(x_0) dx_0 p^\gamma_{z_1}(y_1) dy_1 \cdots p^\gamma_{z_n}(y_n) dy_n.
  \end{split} \]
Here $z_m$ and $x_m$ on the right are recursively defined by $z_m=f(x_{m-1})$, $x_m=z_m + y_m$.
We also have the pointwise formula
\[ \begin{split}
  (L_{p^\gamma} L_{f^\gamma} h) (x_1)
  = \int h(x_0) p^\gamma_{f^\gamma x_0}(x_1-f^\gamma x_0) dx_0.
\end{split} \]

Since now $p$ depends on $\gamma$ via three ways, \Cref{l:local1step} becomes
\[ \begin{split}
  \delta (L_{p^\gamma} L_{f^\gamma} h) (x_1)
  = \int \dd p\gamma (x_0,y_1) h(x_0) dx_0,
  \quad \textnormal{where} \quad 
  z_1 = fx_0,
  \quad
  y_1 = x_1-z_1,
  \\
  \dd p\gamma (x_0,y_1) 
  := \dd {}\gamma p^\gamma_{ f^\gamma x_0} (x_1-f^\gamma x_0)
  = \delta p (z_1, y_1) + \delta f^\gamma (x_0)\cdot \left(\pp p z -  \pp p y\right) (z_1,y_1) .
\end{split} \]
Here derivatives $\pp p z$ and $\pp p y$ refer to writing the density as $p^\gamma_{z}(y)$.
If $p$ does not depend on $\gamma$ and $z$, then we recover \Cref{l:local1step}.
\Cref{l:niu} becomes
\[ \begin{split}
  \delta \int \Phi(x_1) (L_{p^\gamma} L_{f^\gamma} h) (x_1) dx_1
  = \int \Phi(x_1) \delta (L_{p^\gamma} L_{f^\gamma} h) (x_1) dx_1
  \\
  = \iint \Phi(x_1) 
  \frac {dp}{p d\gamma} (x_0,y_1) h(x_0) dx_0 \, p_{z_1}(y_1) dy_1.
\end{split} \]
Here 
\[ \begin{split}
  \frac {dp}{p d\gamma} (x_m,y_{m+1})
  := \frac 1 { p_{f x_m}(y_{m+1}) }
  \dd {}\gamma p^\gamma_{ f^\gamma x_m} (x_{m+1}-f^\gamma x_m).
\end{split} \]
\Cref{l:finiteT} becomes
\[ \begin{split}
  \delta \int \Phi(x_T) h_T(x_T) dx_T
  = \int \Phi(x_T) \sum_{m=0} ^ {T-1} 
  \frac {dp}{p d\gamma} (x_m,y_{m+1}) 
  h_0(x_0) dx_0 (p_z dy)_{1\sim T}.
\end{split} \]  
Here $(p_zdy)_{1\sim T}:=p_{z_1}(y_1)dy_1\cdots p_{z_T}(y_T)dy_T$ is the distribution of $Y_n$'s;
$z_{m+1}=fx_m$ is a function of dummy variables $x_0, y_1, \cdots, y_m$.

We still have free centralization for $\Phi$.
So \Cref{l:dhTn} and \Cref{t:dh} still hold accordingly, in particular, when the system is time-homogeneous and has \re{physical density} $h$, we still have
\[ \begin{split}
  \delta \int \Phi(x) h(x) dx
  \,\overset{\mathrm{a.s.}}{=}\, 
  \lim_{W\rightarrow\infty} \lim_{L\rightarrow\infty} \frac 1L \sum_{n=1}^W
  \sum_{l=1}^L \left( \Phi(X_{n+l}) - \Phi_{avg} \right) \,
  \frac{dp}{pd\gamma}(X_l,Y_{1+l}).
\end{split} \]

\subsubsection{General random dynamical systems}
\hfill\vspace{0.1in}
\label{s:rand}

For general random dynamical systems, at each step $n$, we randomly select a map $g$ from a family of maps, denote this random map by $G$, the dynamics is now
\[ \begin{split}
  X_{n+1} = G(X_n).
\end{split} \]
The selections of $G$'s are independent among different $n$.
It is not hard to see that our pictorial intuition still applies, so our work can generalize to this case.

We can also formally transform this general case into the case in the previous subsubsection.
To do so, define the deterministic map $f$
\[ \begin{split}
  f(x) := \E [G(x)],
\end{split} \]
where the expectation is with respect to the randomness of $G$.
Then the dynamic equals in distribution to 
\[ \begin{split}
  X_{n+1} = f(X_n) + Y_{n+1},
  \quad \textnormal{where} \quad 
  Y_{n+1} = G(X_n) - f(X_n).
\end{split} \]
Hence, the distribution of $Y_{n+1}$ is completely determined by $X_n$ and the distribution of $G$.
Note that we still need to compute derivatives of the distribution of $Y$, or $G$.


\section{Foliated noise and perturbation}
\label{s:foliation}

This section shows that the kernel-differentiation formulas are still correct for the case where the noise and $\delta f$ are along a given foliation.

\subsection{Preparation: measure notations, foliations, conditional densities}
\label{s:prep2}

\lil{kou}
\ree{
In this section we consider a more general phase space, $\cM$, which is a $M$-dimensional manifold.
}
We assume that a neighborhood of the support of the physical measure $\mu$ can be foliated by a single chart: this basically means that we can partition the neighborhood by $c$-dimensional submanifolds.
More specifically, there is $F = \{F_\alpha\}_{\alpha\in A}$, a family of $c$-dimensional $C^3$ submanifolds on $\cM$, and a $C^3$ diffeomorphism between the neighborhood and an open set in $\R^M$, such that the image of $F_\alpha$ is the $c$-dimensional horizontal plane, $\{x:x_1=\ldots =x_{M-c} = 0\}$.
Each submanifold is called a `leaf'.
An extension to the case where the neighborhood admits not a single chart but a foliated atlas composed of many charts, is possible, and we leave it to the future.

Under single-chart foliation, this section considers the system
\[
Z_n = f(X_n)
\,,\quad
\tZ_n = \tf(Z_n)
\,,\quad
X_{n+1} = X_{n+1} (\gamma, \tZ_n).
\]
Here $\tf$ depends on $\gamma$ with default value $\gamma=0$, $f$ is fixed and independent of $\gamma$.
The last equation just means that $X_{n+1}$ is a random variable whose distribution depends on $\gamma$ and $\tZ_n$, with details to be explained in the following paragraphs.
The notation in the first equation relates to our previous notation by $f^\gamma = \tf \circ f$.

We assume that $X$ and $\tf$ are parallel to $F_\alpha$.
More specifically, for any $\gamma$ in a small interval on $\R$ and any $z\in \cM$,
\[
\tz = \tf(z) \in  F_\alpha(z)
\;;\quad
X(\gamma, \tz) \in F_\alpha(z).
\]
where $F_\alpha(z)$ is the leaf $F_\alpha$ at $z$.
Note that we do \textit{not} require $f$ to be constrained by the foliation, so $x$ and $fx$ can be on different leaves: This is more general than systems restricted on one submanifold.

The measures considered in this section are not necessarily absolute continuous with respect to Lebesgue on $\cM$.
So we should use another set of notation for measures.
The transfer operator on measures is denoted by $f_*$, more specifically, for any measure $\nu$,
\[\begin{split}
  \int \Phi(x) \, d (f_* \nu)(x)
  := 
  \int \Phi(fx)  d\nu (x).
\end{split}\]

We can disintegrate a measure $\nu$ into conditional measure $[\nu]^F_\alpha$ on each leaf and the quotient measure $\nu'$ on the set of $\alpha$.
Since we have only one fixed foliation, we shall just omit the superscript $F$.
Here $\{[\nu]_\alpha\}_{\alpha\in A}$ is a family of probability measures such that each `lives on' $F_\alpha$.
Moreover, the integration of any smooth observable $\phi$ can be written in two consecutive integrations, 
\[
\int_\cM \phi(z) d\nu(z)
= \int_A \int_{F_\alpha} \phi(z) d[\nu]_\alpha(z) d\nu'(\alpha).
\]
We typically use $\nu$ to denote the measure of $Z$.

We assume that the random variable $X(\gamma,\tz)$ has a $C^3$ density function $p(\tz, \cdot)$ with respect to the Lebesgue measure on $F_\alpha(z)$.
That is, $p$ is the conditional density of $X|\tZ$.
Then we can define the operator $p_*$, which transfers $\tnu$, the measure of $\tZ$, to $\mu$, the measure of $X$.
It transports measures only within each leaf, but not across different leaves, so the quotient measure $\tnu'$ is left unchanged.
On the other hand, the density of the conditional measure is changed to
\begin{equation}    \label{e:pstar}
[p_* \tnu]^{\textrm{dst}}_\alpha (x)
:= \dd{[p_* \tnu]_\alpha}{m} (x)
= \int_{F_\alpha} p(\tz, x) d[\tnu]_\alpha (\tz),
\end{equation}
where $d(\cdot)/dm$ is the Radon–Nikodym derivative with respect to the Lebesgue measure on $F_\alpha$.
Note that $[\tnu]_\alpha$ does not necessarily have a density on $F_\alpha$.
We have the double and triple integration formulas for the expectation of joint distribution 
\begin{equation}\begin{split}  \label{e:23}
\E [\phi(X,\tZ)]
= \int_{\tz\in \cM} \int_{x\in F_\alpha(\tz)}  \phi(x,\tz) p(\tz, x) dx d \tnu (\tz)
\\
= \int_{\alpha\in A} \int_{x\in F_\alpha} \int_{\tz \in F_\alpha} \phi(x,\tz) p(\tz, x)  d[\tnu]_\alpha (\tz) dx d[\tnu]'(\alpha)
\end{split}\end{equation}
To summarize, we shall assume the following in this section.

\begin{assumption}
\label{ass:folibasic}
Assume that a neighborhood of the support of the measures of interest is foliated by a $C^3$ family of $C^3$ leaves $\{F_\alpha\}_{\alpha\in A}$.
For any $z\in\cM$, any $\gamma$ in a small interval in $\R$ around zero,
we have $\tf(z) \in  F_\alpha(z)$,
$X(\gamma, \tz) \in F_\alpha(\tz)$.
The density $p$ is $C^2$ on $F_\alpha$ (but is singular with respect to $\cM$). 
\end{assumption}


For the case of multiple steps, we shall denote the sequence of measures of $X_n$ by
\[
\mu_n^\gamma := (p_* \tf_* f_*)^n \mu_0
\;,\quad
\mu^\gamma := \lim_{n\rightarrow\infty} \mu_n,
\]
where the limit is in the weak* sense, and $\mu^\gamma$ is called the physical measure.
We still make \Cref{ass:hdh}, where statement (a) should be interpreted as the weak* convergence of the measure, not density, since the physical measure can be singular and does not have a density function.
Also, for statement (b), in the definition of correlation in \Cref{e:corre}, the density $p$ is now the conditional density.
For foliated cases, it is more difficult to prove \Cref{ass:hdh} from basic assumptions, although they seem true in many numerical applications; we did not find accurate references for this purpose.

\subsection{Foliated kernel-differentiation in one-step}
\label{s:foli}

We can see that the pictorial intuition in \Cref{s:intui} still makes sense on individual leaves.
So, it is not surprising that the main theorems are still effective.
We shall first prove the formula on one leaf in one-step, which is the foliated version of \Cref{l:local1step,l:niu}.
Note that here one-step means to transfer the measure by $\tf$, so the foliation is unchanged.

\begin{lemma} [local one-step kernel-differentiation on a leaf, comparable to \Cref{l:local1step}]
\label{t:foli1loc}
Under \Cref{ass:basic,ass:folibasic}, for any measure $\nu$,
\[
\delta [p_* \tf_* \nu]^{\textrm{dst}}_\alpha (x)
= \int_{F_\alpha} \delta \tf(z) \cdot d_z p(z, x) d[\nu]_\alpha (z)
\]
Here $\delta:=\partial/\partial \gamma|_{\gamma=0}$, 
and the integrand is the derivative of $p(z, x)$ with respect to $z$ in the direction $\delta \tf(z)$.
\end{lemma}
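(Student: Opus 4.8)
The plan is to mimic the derivation of \cref{l:local1step}, but to perform every computation \emph{within a single leaf} $F_\alpha$, using the conditional density rather than the full density on $\cM$. The key structural fact, guaranteed by \cref{ass:folibasic}, is that both $\tf$ and the noise $X(\gamma,\tz)$ keep us on the same leaf $F_\alpha(z)$; hence the operators $\tf_*$ and $p_*$ act on the conditional measure $[\nu]_\alpha$ internally to each leaf and leave the quotient measure $\nu'$ untouched. This means we may fix $\alpha$ throughout and treat $F_\alpha$ essentially as a lower-dimensional Euclidean space on which the one-step argument of \cref{l:local1step} runs verbatim.

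First I would write down the pointwise expression for the conditional density of the pushed-forward measure. Starting from the definition of $[p_*\tnu]^{\textrm{dst}}_\alpha$ in \cref{e:pstar}, and using that $\tf$ transports the conditional measure $[\nu]_\alpha$ to $[\tf_*\nu]_\alpha$, I would substitute to obtain
\[
[p_* \tf_* \nu]^{\textrm{dst}}_\alpha (x)
= \int_{F_\alpha} p(\gamma, \tf(z), x) \, d[\nu]_\alpha (z),
\]
where I have pushed the integration variable back through $\tf$ (the change-of-variables inside the leaf, analogous to substituting $p$ into the definition \cref{e:Lh} in the proof of \cref{l:local1step}). The crucial point, exactly as in the unfoliated case, is that $[\nu]_\alpha$ does \emph{not} depend on $\gamma$, so all $\gamma$-dependence sits inside $p(\gamma,\tf(z),x)$ through the two arguments $\gamma$ and $\tf(z)$.

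Next I would differentiate in $\gamma$ under the integral sign and apply the chain rule to $p(\gamma, \tf(z), x)$. Since $\tf$ is the identity at $\gamma=0$, the $z$-slot is evaluated at $z$ itself, and the derivative of the middle argument contributes $\delta\tf(z)$ contracted against the $z$-derivative $d_z p(z,x)$; this reproduces the claimed integrand. The derivative with respect to the explicit $\gamma$-slot of $p$ would vanish here because in \cref{t:foli1loc} the density $p(z,x)$ carries no intrinsic $\gamma$-dependence (that more general case is the one treated in \cref{s:generalize}); so only the $\delta\tf(z)\cdot d_z p(z,x)$ term survives.

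The main obstacle I anticipate is justifying the interchange of $\delta$ with the leaf integral $\int_{F_\alpha}$, together with the legitimacy of differentiating the Radon–Nikodym derivative, when $[\nu]_\alpha$ need not possess a density on $F_\alpha$. Because $p$ is only $C^2$ along the leaf by \cref{ass:folibasic}, I would need uniform control of $\partial_\gamma p$ and $d_z p$ over the relevant range of $\gamma$ to apply dominated convergence; the bounded-$C^2$-norm hypothesis of \cref{ass:basic} should supply exactly this domination, and the diffeomorphism flattening $F_\alpha$ to a horizontal plane lets me carry out the $z$-differentiation in coordinates. Everything else is the one-step calculus already established for \cref{l:local1step}, transported leaf-by-leaf.
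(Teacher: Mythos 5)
Your proposal is correct and follows essentially the same route as the paper: both reduce to the identity $[p_* \tf_* \nu]^{\textrm{dst}}_\alpha (x) = \int_{F_\alpha} p(\gamma, \tf z, x)\, d[\nu]_\alpha (z)$ (justified by the fact that $\tf_*$ acts within each leaf, so pushing forward commutes with disintegration and the quotient measure is unchanged) and then differentiate in $\gamma$ at $\gamma=0$ using $\tf = \mathrm{id}$ there. Your explicit remark that the $\partial_\gamma$-slot of $p$ contributes nothing in this lemma, and your attention to dominated convergence for the interchange of $\delta$ with the leaf integral, are consistent with (and slightly more careful than) the paper's treatment.
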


\begin{remark*}
    Note that $p(z,x)$ here becomes $p(x-z)$ in \Cref{l:local1step}, so taking derivatives in $z$ gives a minus sign.
    Compared to previous results, note that here $p(z, x)$ depends on $z$ in two ways: the distance between $z$ and $x$ could change if $z$ changes; the distribution centered at different $z$ may be different.
    Hence, roughly speaking, the term $d_z p$ here is the term $\pp p z -  \pp p y$ in \Cref{s:pgammaz}.
\end{remark*}

\begin{proof}
By \Cref{e:pstar},
\begin{equation} 
\label{e:dada}
[p_* \tf_* \nu]^{\textrm{dst}}_\alpha (x)
= \int_{F_\alpha} p(\tz, x) d[\tf_* \nu]_\alpha (\tz),    
\end{equation}

We give a formula for $d[\tf_* \nu]_\alpha$.
By definition of $p_*$ and conditional densities, for any smooth observable function $\phi$,
\[\begin{split} 
\int_{A} \int_{F_\alpha} \phi(\tz) d [\tf_* \nu]_\alpha (\tz) d[\tf_*\nu]'(\alpha)
= \int_{\cM} \phi(\tz) d \tf_* \nu (\tz)
\\
= \int_{\cM} \phi(\tf z) d \nu (z)
= \int_{A} \int_{F_\alpha} \phi(\tf z) d [\nu]_\alpha (z) d[\nu]'(\alpha).
\end{split}\]
Since $\tf_*$ preserves measure within each leaf, so the quotient measure $\nu'=[\tf_* \nu]'$ is unchanged.
Hence,
\[
\int_{F_\alpha} \phi(\tz) d [\tf_* \nu]_\alpha (\tz)
= \int_{F_\alpha} \phi(\tf z) d [\nu]_\alpha (z).
\]
In short, we find that pushing-forward by $\tf$ commutes with taking conditions.

Substituting back to \Cref{e:dada}, we get
\[
[p_* \tf_* \nu]^{\textrm{dst}}_\alpha (x)
= \int_{F_\alpha} p(\tz, x) d[\tf_* \nu]_\alpha (\tz)
= \int_{F_\alpha} p(\tf z, x) d[\nu]_\alpha (z)
\]
Then we can differentiate by $\gamma$ and evaluate at $\gamma=0$ to prove the lemma; 
note that $\tz =z$ at $\gamma=0$.
\end{proof}

\begin{lemma} [integrated one-step foliated kernel-differentiation, comparable to \Cref{l:niu}]
\label{t:foli1int}
Under \Cref{ass:basic,ass:folibasic}, for any fixed measure $\nu$ and any smooth observable $\Phi$,
\[ \begin{split}
\delta \int \Phi(x) d(p_*\tf_* \nu) (x) 
= \int_{z\in \cM} \int_{x\in F_\alpha(z)}  \Phi(x) \delta \tf(z) \cdot 
\frac{d_z p(z, x)}{ p(z, x)}  p(z, x) dx d\nu(z).
\end{split} \]
\end{lemma}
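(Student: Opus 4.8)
The plan is to mirror the proof of \cref{l:niu}, but to work leaf-by-leaf through the disintegration of $\nu$. First I would use the disintegration formula to split the integral over $\cM$ into an integration over each leaf followed by an integration over the quotient set $A$. Since both $\tf_*$ and $p_*$ act within individual leaves and leave the quotient measure unchanged (the former was shown in the proof of \cref{t:foli1loc}, the latter is noted just before \cref{e:pstar}), the quotient measure of $p_*\tf_*\nu$ equals $\nu'$ and carries no $\gamma$-dependence. Therefore
\[
\int \Phi(x)\, d(p_*\tf_*\nu)(x)
= \int_A \int_{F_\alpha} \Phi(x)\, [p_*\tf_*\nu]^{\textrm{dst}}_\alpha(x)\, dx\, d\nu'(\alpha),
\]
and all the $\gamma$-dependence sits in the conditional density $[p_*\tf_*\nu]^{\textrm{dst}}_\alpha$.

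Next I would differentiate under the integral sign in $\gamma$ and insert the local formula of \cref{t:foli1loc} for $\delta[p_*\tf_*\nu]^{\textrm{dst}}_\alpha(x)$. This produces a triple integral: over the quotient $A$, over $x\in F_\alpha$ from the outer integral, and over $z\in F_\alpha$ coming from the local formula. I would then apply Fubini on the leaf to swap the two leaf-integrations, moving the $z$-integration outside and the $x$-integration inside, so that the inner block becomes $\int_{F_\alpha}\Phi(x)\,\delta\tf(z)\cdot d_z p(z,x)\,dx$. Reading the disintegration formula backwards, the remaining $\int_A\int_{F_\alpha}(\cdots)\,d[\nu]_\alpha(z)\,d\nu'(\alpha)$ then collapses into the single integral $\int_{z\in\cM}(\cdots)\,d\nu(z)$.

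The last step is cosmetic: I would multiply and divide the integrand by $p(z,x)$, which turns the inner integral into $\int_{F_\alpha(z)}\Phi(x)\,\delta\tf(z)\cdot \frac{d_z p(z,x)}{p(z,x)}\,p(z,x)\,dx$, giving exactly the Monte-Carlo-ready form asserted in the statement.

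I expect the main obstacle to be justifying the two interchanges — differentiation under the integral in $\gamma$, and Fubini on the leaf — in a setting where the conditional measures $[\nu]_\alpha$ may be singular and the leaves need not be compact. The singularity is in fact harmless, because $[\nu]_\alpha$ and $\nu'$ do not depend on $\gamma$ and only the smooth kernel $p(\gamma,z,x)$ gets differentiated; the $C^2$ and $C^3$ regularity in \cref{ass:basic,ass:folibasic}, together with restricting attention to a neighborhood of the support on which the single-chart foliation is valid, should supply the dominating bounds that legitimize both interchanges.
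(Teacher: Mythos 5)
Your proposal matches the paper's proof essentially step for step: disintegrate via the quotient measure, use that $p_*$ and $\tf_*$ act within leaves so $[p_*\tf_*\nu]'=\nu'$ is $\gamma$-independent, differentiate under the integral, insert \cref{t:foli1loc}, multiply and divide by $p(z,x)$, and recombine the triple integral into the double integral using \cref{e:23}. The extra attention you give to justifying the interchange of $\delta$ with the integral and the Fubini step on the leaf is a welcome refinement of what the paper passes over with a one-line boundedness remark.
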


\begin{remark*}
  See \Cref{e:23} for the expectation notation which might help in understanding.
\end{remark*}

\begin{proof}

By the definition of conditional measure,
\[ \begin{split}
  \int \Phi(x) d(p_* \tf_* \nu) (x) 
  =  \int_A \int_{F_\alpha} \Phi(x) d[p_*\tf_*\nu]_\alpha(x) d[p_* \tf_*\nu]'(\alpha)
\end{split} \]
Since both $p_*$ and $\tf_*$ preserve the measure within each leaf, $[p_* \tf_*\nu]' = [\nu]'$ is not affected by $\gamma$.
Further express the conditional measure by integrating the density, we get
\begin{equation}
\label{e:lala}
\int \Phi(x) d(p_* \tf_*\nu) (x) 
= \int_A \int_{F_\alpha} \Phi(x) [p_*\tf_*\nu]^{\textrm{dst}}_\alpha(x) dx d[\nu]'(\alpha) 
\end{equation} 

Differentiate on both sides 
\[ \begin{split}
  \delta \int \Phi(x) d(p_*\tf_* \nu) (x) 
  = \delta \int_A \int_{F_\alpha} \Phi(x) [p_*\tf_* \nu]^{\textrm{dst}}_\alpha(x) dx d[\nu]'(\alpha)\\
\end{split} \]
Since every function in the integrand is bounded, we can move $\delta$ inside,
\[ \begin{split}
  = \int_A \int_{x\in F_\alpha} \Phi(x) \delta [p_*\tf_* \nu]^{\textrm{dst}}_\alpha(x) dx d[\nu]'(\alpha).
\end{split} \]
Apply \Cref{t:foli1loc}, 
\[ \begin{split}
  = \int_{\alpha\in A} \int_{x\in F_\alpha} \int_{z\in F_\alpha} \Phi(x) \delta \tf(z) \cdot d_z p(z, x) d[\nu]_\alpha (z) dx d[\nu]'(\alpha).
\\
  = \int_{\alpha\in A} \int_{x\in F_\alpha} \int_{z\in F_\alpha} \Phi(x) \delta \tf(z) \cdot 
  \frac{d_z p(z, x)}{ p(z, x)} d[\nu]_\alpha (z)  p(z, x) dx d[\nu]'(\alpha).
\end{split} \]
Then use \Cref{e:23} to transform the triple integration into double integration.
\end{proof}

\subsection{Foliated kernel-differentiation over finitely many steps}
\label{s:foliN}

This subsection proves results for finitely many steps.
Most proofs are similar to their counterparts in \Cref{s:manystep}, so we skip the proofs that are repetitive.

\begin{lemma}[comparable to \Cref{l:PhiMC}] \label{l:foliPhiMC}
Under \Cref{ass:basic,ass:folibasic}, let $\tz_m := \tf z_m := \tf f x_{m-1}$, then for any $\mu_0$, any $n\in \N$,
\[ \begin{split}
\int \Phi(x_n) d((p_*\tf_* f_*)^n \mu_0) (x_n)
\\= \int_{x_0\in \cM} \int_{x_1\in F_\alpha(\tz_1)} \cdots \int_{x_n\in F_\alpha(\tz_n)}
\Phi(x_n) p(\tz_n, x_n)dx_n \cdots p(\tz_1, x_1) dx_1 d\mu_0(x_0).
\end{split} \]
\end{lemma}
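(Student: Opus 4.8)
\emph{Proof proposal.} The plan is to induct on $n$, peeling off one application of the composite operator $p_*\tf_* f_*$ at each stage, mirroring the recursion in the proof of \cref{l:PhiMC} but with the convolution step replaced by the leaf-wise integration formula \cref{e:23}. The base case $n=0$ is immediate, since $(p_*\tf_*f_*)^0\mu_0=\mu_0$ and the claimed right-hand side collapses to $\int_{x_0\in\cM}\Phi(x_0)\,d\mu_0(x_0)$.

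For the inductive step I would set $\mu_{n-1}:=(p_*\tf_*f_*)^{n-1}\mu_0$ and $\tnu_n:=\tf_* f_*\mu_{n-1}$, so that $\mu_n=p_*\tnu_n$. Applying the double-integration form of \cref{e:23} to the observable depending only on $x_n$ gives
\[
\int \Phi(x_n)\,d\mu_n(x_n)
= \int_{\tz_n\in\cM}\int_{x_n\in F_\alpha(\tz_n)} \Phi(x_n)\,p(\tz_n,x_n)\,dx_n\,d\tnu_n(\tz_n).
\]
I would then unwind the outer measure using only the definition of the measure transfer operators: since $\tnu_n=\tf_* f_*\mu_{n-1}$, pushing a bounded test function $g$ back through $\tf_*$ and $f_*$ converts $\int g\,d\tnu_n$ into $\int g(\tf f x_{n-1})\,d\mu_{n-1}(x_{n-1})$, where now $\tz_n=\tf f x_{n-1}$ is a function of $x_{n-1}$. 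Choosing $g(\tz_n):=\int_{x_n\in F_\alpha(\tz_n)}\Phi(x_n)p(\tz_n,x_n)\,dx_n$ identifies the whole expression with $\int \Psi(x_{n-1})\,d\mu_{n-1}(x_{n-1})$ for the new observable $\Psi:=g\circ\tf\circ f$, to which the inductive hypothesis applies; substituting $\Psi$ back reproduces the claimed $n$-fold nested integral. Notably, unlike the derivative computation in \cref{t:foli1loc}, this argument needs no leaf-conditioning commutation—the plain pushforward identity $\int \Phi\,d(f_*\nu)=\int\Phi\circ f\,d\nu$ suffices.

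The main point of genuine care is not the nested bookkeeping but the measure-theoretic legitimacy of each step. Specifically I must check that the inner leaf-integral $g(\tz_n)=\int_{F_\alpha(\tz_n)}\Phi\,p(\tz_n,\cdot)\,dx_n$ is a well-defined, bounded, measurable function of its base point; this is exactly where \cref{ass:basic,ass:folibasic} enter, since they guarantee $p$ is $C^2$ with bounded norm along each leaf (and supported on $F_\alpha(\tz_n)$), so the integral exists and depends measurably on $\tz_n$ through the $C^3$ foliation chart. Because $\mu_0$, and hence each intermediate $\mu_{n-1}$, may be singular with respect to Lebesgue, I would phrase every pushforward purely through the transfer-operator definition and avoid invoking densities of the intermediate measures—the only density ever used is $p$ along the leaves. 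This is the sole delicate issue; the rest is formally identical in spirit to \cref{l:PhiMC}.
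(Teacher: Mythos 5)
Your proposal is correct, and it is essentially the argument the paper intends: the paper omits this proof as ``repetitive'' with \cref{l:PhiMC}, and your induction --- peeling off one factor of $p_*\tf_* f_*$ per step, replacing the convolution identity \cref{e:year} by the leaf-wise double-integration formula \cref{e:23} together with the plain pushforward identity --- is exactly that analogy carried out, with the right observation that the new observable $\Psi=g\circ\tf\circ f$ must be checked to be bounded and measurable so the inductive hypothesis applies.
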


\begin{remark*}
    Note that for the foliated case, the integration domain of $x_n$ depends on $x_{n-1}$, so the multiple integration should first integrate $x_n$ with $x_{n-1}$ fixed.
\end{remark*}

\begin{theorem} [foliated kernel-differentiation formula for $\delta \mu_T$, comparable to \Cref{l:finiteT}] \label{l:foliT}
Under \Cref{ass:basic,ass:folibasic}, then
\[ \begin{split}
\delta \int \Phi(x_T) d\mu_T (x_T)
= \int_{x_0\in \cM} \int_{x_1\in F_\alpha(z_1)} \cdots \int_{x_T\in F_\alpha(z_T)}
\Phi(x_T) 
\\
\sum_{m=1} ^ {T} \left( \delta \tf(z_m) \cdot 
\frac{d_z p(z_m, x_m)}{ p(z_m, x_m)} \right)
p(z_T, x_T)dx_T \cdots p(z_1, x_1) dx_1 d\mu_0(x_0).  
\end{split} \]  
\end{theorem}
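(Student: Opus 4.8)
The plan is to mirror the proof of \cref{l:finiteT}, substituting the three foliated ingredients for their scalar counterparts: the Leibniz expansion of the evolution operator, the pushforward identity \cref{l:foliPhiMC} in place of \cref{l:PhiMC}, and the one-step integrated formula \cref{t:foli1int} in place of \cref{l:niu}. First I would differentiate the product $\mu_T = (p_* \tf_* f_*)^T \mu_0$. Since $f_*$ and $p_*$ do not depend on $\gamma$ and $\tf = \mathrm{id}$ at $\gamma = 0$, each unperturbed factor reduces to $p_* f_*$, and the product rule gives
\[
  \delta \int \Phi(x_T)\, d\mu_T(x_T)
  = \sum_{m=1}^{T} \int \Phi(x_T)\, d\!\left[ (p_* f_*)^{T-m}\, p_*\, (\delta \tf_*)\, f_*\, (p_* f_*)^{m-1} \mu_0 \right](x_T),
\]
where $\delta\tf_*$ is the derivative of the pushforward by $\tf_\gamma$ and the summand indexed by $m$ is the term in which the $m$-th step is the perturbed one. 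Interchanging $\delta$ with these finitely many bounded integrations is justified by \cref{ass:basic,ass:folibasic}, exactly as in the scalar case.

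Next, for each fixed $m$, I would process the summand in three stages. Applying \cref{l:foliPhiMC} to the trailing block $(p_* f_*)^{T-m}$ transports the observable through the later unperturbed steps, rewriting the integral so that $\Phi(x_T)$ is integrated against the leafwise densities $p(z_{m+1}, x_{m+1}), \dots, p(z_T, x_T)$ while the remaining measure is $p_* (\delta\tf_*)\, f_* (p_* f_*)^{m-1}\mu_0$, viewed as a (signed) measure in $x_m$; here the lemma is used in its linear extension to signed measures, just as \cref{l:PhiMC} is stated for not-necessarily-positive densities. I would then invoke the one-step integrated foliated formula \cref{t:foli1int} at this single perturbed step, taking $\nu = f_*(p_* f_*)^{m-1}\mu_0$ to be the measure of $z_m = f x_{m-1}$; this produces the factor $\delta\tf(z_m)\cdot \frac{d_z p(z_m, x_m)}{p(z_m, x_m)}$ together with the density $p(z_m, x_m)\, dx_m$ on the leaf $F_\alpha(z_m)$. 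A second application of \cref{l:foliPhiMC}, now to the leading block $(p_* f_*)^{m-1}$, unfolds the earlier steps into the iterated leafwise integrals $p(z_{m-1}, x_{m-1})\, dx_{m-1} \cdots p(z_1, x_1)\, dx_1\, d\mu_0(x_0)$. Summing over $m = 1, \dots, T$ and pulling the sum inside the common multiple integral then yields exactly the claimed formula.

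The main obstacle will be the bookkeeping of the disintegration along the chain, precisely because $f$ need not respect the foliation: the image $z_m = f x_{m-1}$ generally lands on a different leaf from $x_{m-1}$, so each $f_*$ step mixes leaves and only the subsequent $p_* \tf_*$ pair acts within the leaf $F_\alpha(z_m)$. Consequently the domain of the $x_m$-integration, $F_\alpha(z_m) = F_\alpha(f x_{m-1})$, depends on $x_{m-1}$, and the iterated integral must be assembled in the order flagged in the remark after \cref{l:foliPhiMC} (integrate $x_m$ with $x_{m-1}$ held fixed), with \cref{t:foli1int} inserted at exactly the step whose leaf is $F_\alpha(z_m)$. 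The one genuinely foliation-specific point to verify is that \cref{t:foli1int}, stated for $p_* \tf_*$ applied to an arbitrary fixed measure, applies verbatim with $\nu = f_*(p_* f_*)^{m-1}\mu_0$, and that the leafwise derivative $d_z p$ interacts correctly with the quotient measure that $f_*$ and $p_* \tf_*$ leave untouched; everything else is the recursion already carried out in \cref{l:finiteT} and \cref{l:foliPhiMC}.
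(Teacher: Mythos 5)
Your proposal is correct and follows essentially the same route as the paper: in both, the heart of the argument is the product rule over which step carries the perturbation, with the one-step integrated foliated formula \cref{t:foli1int} applied at the perturbed step and the unperturbed steps unfolded into the iterated leafwise integrals. The only difference is organizational: the paper phrases the expansion as a backward recursion (differentiate the last step via \cref{t:foli1int}, obtaining the $m=T$ term plus a remainder involving $\delta\mu_{T-1}$, then recurse on the remainder using \cref{e:pstar}), whereas you write the flat Leibniz sum over $m$ and invoke \cref{l:foliPhiMC} explicitly for the leading and trailing blocks, exactly mirroring the paper's own proof of \cref{l:finiteT}.
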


\begin{proof}
We shall recursively apply \Cref{t:foli1int}.
Note that $\nu$ is fixed there, but here $\mu_{T-1}$ is also affected by $\gamma$.
First, we differentiate the last step; by \Cref{t:foli1int},
\[ \begin{split}
\delta \int \Phi(x_T) d(p_*\tf_*f_* \mu_{T-1}) (x_T) 
\\
= \int_{x_{T-1}\in \cM} \int_{x_T\in F_\alpha(z_T)}  \Phi(x_T) \delta \tf(z_T) \cdot 
\frac{d_z p(z_T, x_T)}{ p(z_T, x_T)}  p(z_T, x_T) dx_T d\mu_{T-1}(x_{T-1})
\\
+ \int_{x_{T-1}\in \cM} \int_{x_T\in F_\alpha(z_T)}  \Phi(x_T) 
p(z_T, x_T) dx_T d \delta \mu_{T-1}(x_{T-1}).
\end{split} \]
Then we can apply \Cref{t:foli1int,e:pstar} to the last term repeatedly until $x_0$.
\end{proof}

\begin{lemma}[free centralization, comparable to \Cref{l:int0}]
\label{l:int0foli}
Under \Cref{ass:basic,ass:folibasic}, for any $1\le m\le T$, we have
\[ \begin{split}
\int_{x_0\in \cM} \int_{x_1\in F_\alpha(z_1)} \cdots \int_{x_T\in F_\alpha(z_T)}
\left( \delta \tf(z_m) \cdot 
\frac{d_z p(z_m, x_m)}{ p(z_m, x_m)} \right)
p(z_T, x_T)dx_T \cdots p(z_1, x_1) dx_1 d\mu_0(x_0)
=0.  
\end{split} \]  
\end{lemma}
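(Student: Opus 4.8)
The plan is to mirror the proof of \cref{l:int0}: fix the summand index $m$, integrate away all factors that do not interact with that summand, and reduce the claim to the vanishing of an integral of $d_z p$ over a single leaf. I would first integrate out the innermost variables $x_T, x_{T-1}, \ldots, x_{m+1}$, one at a time from the inside out, holding $x_m$ and all earlier variables fixed. For each index $n>m$ the only factor depending on $x_n$ is the density $p(z_n, x_n)$, and integrating it over its leaf gives $\int_{F_\alpha(z_n)} p(z_n, x_n)\, dx_n = 1$. This is legitimate precisely because, unlike in \cref{l:foliT}, the centralization integrand of \cref{l:int0foli} carries no observable $\Phi$ and no other $x_n$-dependent factor for $n>m$, so these integrals peel off cleanly and contribute only factors of one.

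Next I would cancel the density $p(z_m, x_m)$ in the denominator against the measure factor $p(z_m, x_m)\,dx_m$, leaving the integrand $\delta \tf(z_m)\cdot d_z p(z_m, x_m)$. Since $z_m = f(x_{m-1})$, the vector $\delta\tf(z_m)$ does not depend on $x_m$, which now appears only inside $d_z p(z_m, x_m)$; I would therefore pull $\delta\tf(z_m)$ outside the $x_m$-integral and examine $\int_{F_\alpha(z_m)} \delta\tf(z_m)\cdot d_z p(z_m, x_m)\, dx_m$. By \cref{ass:folibasic} we have $\tf_\gamma(z_m)\in F_\alpha(z_m)$ for every $\gamma$, so $\delta\tf(z_m)\in T_{z_m}F_\alpha(z_m)$ is tangent to the leaf; consequently $\delta\tf(z_m)\cdot d_z p(z_m,\cdot)$ is the derivative of $p$ as its first argument moves \emph{along} the leaf $F_\alpha(z_m)$, during which the integration domain is unchanged. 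Commuting this leaf-tangent derivative with the $x_m$-integration (justified by the $C^2$ bound on $p$ from \cref{ass:basic}) gives $\delta\tf(z_m)\cdot d_z\big(\int_{F_\alpha(z_m)} p(z_m, x_m)\, dx_m\big) = \delta\tf(z_m)\cdot d_z(1) = 0$. The remaining integrations over $x_{m-1},\ldots,x_0$ are then integrations of the zero function, so the whole expression vanishes.

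The main obstacle is exactly this differentiation-under-the-integral step. In the flat case of \cref{l:int0} one uses $\int dp\, dy = 0$, which follows from $p\to 0$ at infinity; here the analogous identity $\int_{F_\alpha} d_z p\, dx = 0$ is not automatic, because $d_z$ is a derivative in $z$ while the integration is in $x$, and a priori moving $z$ could move the leaf on which $p$ is normalized. The foliation hypothesis is what resolves this: because $\delta\tf$ is parallel to the leaves, perturbing $z$ in the direction $\delta\tf(z_m)$ keeps it on the same leaf $F_\alpha(z_m)$, so the normalization $\int_{F_\alpha} p = 1$ persists under the perturbation and differentiates to zero. I would state the tangency $\delta\tf(z_m)\in T_{z_m}F_\alpha$ explicitly and interpret $d_z p$ as the intra-leaf derivative, since this is precisely the point at which the foliated structure, rather than noise in all directions, is used.
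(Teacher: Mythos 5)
Your proposal is correct and follows essentially the same route as the paper: integrate out the steps after $m$ (each leafwise integral of $p$ equals one), then observe that the remaining inner integral over $x_m$ vanishes. The only difference is that where you argue directly that the leaf-tangency of $\delta\tf$ lets you differentiate the normalization $\int_{F_\alpha(z_m)} p(z_m,x_m)\,dx_m=1$ under the integral sign, the paper reaches the same conclusion by citing \cref{t:foli1int} with $\Phi$ constant --- your argument is precisely what that lemma encodes in this special case.
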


\begin{proof}
We first do the inner integrations for steps later than $m$, which are all one.
Hence, the left side of the equation,
\[ \begin{split}
=
\int_{x_0\in \cM} \int_{x_1\in F_\alpha(z_1)} \cdots \int_{x_m\in F_\alpha(z_m)}
\delta \tf(z_m) \cdot 
\frac{d_z p(z_m, x_m)}{ p(z_m, x_m)}
p(z_m, x_m)dx_m \cdots p(z_1, x_1) dx_1 d\mu_0(x_0)
\\
=
\int_{x_{m-1}\in \cM} \int_{x_m\in F_\alpha(z_m)}
\delta \tf(z_m) \cdot \frac{d_z p(z_m, x_m)}{ p(z_m, x_m)} 
p(z_m, x_m)dx_m d\mu_{m-1}(x_{m-1}).
\end{split} \]
Here, $\mu_{m-1}$ is evaluated at $\gamma=0$ by default.
Then we can use \Cref{t:foli1int} with $\Phi$ set as a constant to prove the lemma.
\end{proof}

\silverbach*

\subsection{Foliated ergodic kernel-differentiation formula over infinitely many steps}
\label{s:foliinfi}

This subsection is comparable to \Cref{s:infinite,s:ergodic} and states results for infinitely many steps.
We shall skip the proofs since the ideas are the same and only the notation is a bit different.


\begin{lemma}[foliated kernel-differentiation formula for physical measures, comparable to \Cref{l:kd4h}]
Under \Cref{ass:basic,ass:hdh,ass:folibasic},
\[ \begin{split}
\delta \int \Phi(x) d\mu (x)
= \lim_{W\rightarrow\infty} \sum_{n=1} ^ {W} \int_{x_0\in \cM} \int_{x_1\in F_\alpha(z_1)} \cdots \int_{x_n\in F_\alpha(z_n)}
\Phi(x_n) 
\\
\left( \delta \tf(z_1) \cdot 
\frac{d_z p(z_1, x_1)}{ p(z_1, x_1)} \right)
p(z_n, x_n)dx_n \cdots p(z_1, x_1) dx_1 d\mu(x_0).  
\end{split} \]  
\end{lemma}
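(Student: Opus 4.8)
The plan is to mirror the proof of \cref{l:kd4h}, replacing each ingredient by its foliated counterpart. First I would invoke the first part of \cref{ass:hdh}: starting from the physical measure $\mu$ at $\gamma=0$ and evolving by the perturbed dynamics, the pushforwards $\mu_T := (p_*\tf_* f_*)^T \mu$ converge weakly to $\mu_\gamma$. Differentiating the finite-time formula and then letting $T\to\infty$ produces the desired expression, provided the limit and the derivative $\delta$ commute; the second part of \cref{ass:hdh} supplies exactly the uniform-in-$\gamma$ convergence of the correlation sums that legitimizes this exchange.

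Next I would apply \cref{l:foliT} with $\mu_0=\mu$ to write $\delta\int\Phi(x_T)\,d\mu_T(x_T)$ as a sum over $m=1,\dots,T$ of terms, each being the joint integral of $\Phi(x_T)$ against the kernel $\delta\tf(z_m)\cdot d_z p(z_m,x_m)/p(z_m,x_m)$ inserted at step $m$. The crucial observation is that $\mu$ is invariant under the time-homogeneous dynamics, so the stationarity of the orbit sequence lets me shift the time index: the $m$-th summand equals the same integral with the kernel moved to step $1$ and $\Phi$ evaluated at step $T-m+1$. Reindexing by $n=T-m+1$ (so $n$ runs from $1$ to $T$) recasts the finite-time formula as $\sum_{n=1}^{T}$ of the correlation between $\Phi(x_n)$ and the step-$1$ kernel, which is precisely the summand appearing in the statement.

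I would then treat each summand as a correlation function in the sense of \cref{ass:hdh}. By \cref{l:int0foli} the kernel $\delta\tf(z_1)\cdot d_z p(z_1,x_1)/p(z_1,x_1)$ integrates to zero against the invariant measure, so the uncentered correlation coincides with the centered one, and the second part of \cref{ass:hdh} guarantees that $\sum_{n\ge1}$ of these correlations converges uniformly in $\gamma$. This uniform convergence simultaneously identifies the $T\to\infty$ limit of the left side with $\delta\int\Phi\,d\mu$ and renames the outer summation bound to the cutoff $W$, yielding the stated formula.

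The main obstacle I anticipate is justifying the time-shift under invariance in the singular, foliated setting: unlike the absolutely continuous situation of \cref{l:kd4h}, here $\mu$ carries no Lebesgue density, and its disintegration into leaf-conditional measures $[\mu]_\alpha$ must be shown to interact correctly with the pushforwards $f_*$, $\tf_*$, and $p_*$ along the orbit. The commutation of $\tf_*$ with conditioning, already established inside the proof of \cref{t:foli1loc}, is the ingredient that makes stationarity usable; one must check that iterating it along the orbit preserves the correlation structure so that the Birkhoff-style reindexing stays valid. Once this is confirmed, the remaining manipulations are formally identical to the non-foliated argument, which is why the infinite-time statement may be recorded with only notational changes.
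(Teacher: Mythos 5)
Your proposal is correct and follows essentially the route the paper intends: the paper omits the proof of this lemma entirely, stating only that it repeats the argument of \cref{l:kd4h} with notational changes, and your write-up is a faithful foliated transcription of that argument (\cref{ass:hdh} for starting from $\mu$ and exchanging limits, \cref{l:foliT} in place of \cref{l:finiteT}, invariance of $\mu$ for the time-shift and reindexing, \cref{l:int0foli} for free centralization). Your closing caveat about checking that the leaf disintegration commutes with the pushforwards is a reasonable point of care, and it is indeed covered by the commutation of $\tf_*$ with conditioning established in the proof of \cref{t:foli1loc}.
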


\begin{theorem} [orbitwise foliated kernel-differentiation formula for $\delta h$, comparable to \Cref{t:dh}]
\label{t:foli}
Under \Cref{ass:basic,ass:hdh,ass:ergo,ass:folibasic},
\[ \begin{split}
  \delta \int \Phi(x) d\mu(x) \, \overset{\mathrm{a.s.}}{=}\, 
   \lim_{W\rightarrow\infty} \lim_{L\rightarrow\infty} \frac 1L \sum_{n=1}^W  \sum_{l=1}^{L} \left( \Phi(X_{n+l}) - \Phi_{avg} \right)  
 \delta \tf_m(z_{l+1}) \cdot 
\frac{d_z p(z_{l+1}, x_{l+1})}{ p(z_{l+1}, x_{l+1})} \end{split} \]  
according to  $X_0, Y_1, Y_2,\cdots \sim \mu \times p\times p \times \cdots$.
Here $\Phi_{avg}:=\int\Phi d\mu$.
\end{theorem}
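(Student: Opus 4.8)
The plan is to mirror exactly the argument used for the non-foliated ergodic formula in \cref{l:dh}, replacing the additive-noise density $p(y)$ and the derivative $\delta f_\gamma x \cdot \frac{dp}{p}(y)$ by their foliated analogues, namely the conditional density $p(z,x)$ and the integrand $\delta \tf(z) \cdot \frac{d_z p(z,x)}{p(z,x)}$ supplied by \cref{t:foli1int}. Concretely, the starting point is the previous lemma (the foliated analogue of \cref{l:kd4h}), which already expresses $\delta \int \Phi\, d\mu$ as the limit over $W$ of a finite sum of correlation-type integrals, each of the form $\int \Phi(x_n)\, \delta\tf(z_1)\cdot \frac{d_z p(z_1,x_1)}{p(z_1,x_1)}\, d\mu$. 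The task is only to convert each such spatial integral into a time average along one orbit.

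First I would observe that, since $\mu$ is invariant for the Markov dynamics $X_{n+1}=X_{n+1}(\gamma=0,\tZ_n)$ and the foliation structure $F$, $\tf$, $p$ is fixed across steps (time-homogeneity, as required in the infinite-time setting), the sequence $\{X_l, z_{l+1}, x_{l+1}\}_{l\ge 0}$ obtained by launching from $X_0\sim\mu$ is stationary. Then I would invoke Birkhoff's ergodic theorem in its stationary-sequence form, exactly as in the proof of \cref{l:dh}, to replace each $n$-th spatial integral
\[
\int_{x_0\in\cM}\int_{x_1\in F_\alpha(z_1)}\cdots\int_{x_n\in F_\alpha(z_n)}
\Phi(x_n)\,\delta\tf(z_1)\cdot\frac{d_z p(z_1,x_1)}{p(z_1,x_1)}\;p(z_n,x_n)dx_n\cdots p(z_1,x_1)dx_1\,d\mu(x_0)
\]
by the almost-sure limit $\lim_{L\to\infty}\frac1L\sum_{l=1}^L \Phi(X_{n+l})\,\delta\tf(z_{l+1})\cdot\frac{d_z p(z_{l+1},x_{l+1})}{p(z_{l+1},x_{l+1})}$. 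Summing over $1\le n\le W$ and then taking $W\to\infty$ reproduces the stated double limit. Finally, because subtracting the constant $\Phi_{avg}=\int\Phi\,d\mu$ leaves the linear response unchanged (the foliated free-centralization \cref{l:int0foli} guarantees each subtracted term integrates to zero), I would rerun the argument with $\Phi$ replaced by $\Phi-\Phi_{avg}$ to obtain the centralized form in the statement.

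The one genuine subtlety—and the step I expect to require the most care—is the justification that the stationary sequence in question is ergodic, so that Birkhoff's theorem yields the \emph{constant} spatial average rather than a conditional expectation over the invariant $\sigma$-algebra. In the additive-noise case this follows from the mixing/decay-of-correlations content of \cref{ass:hdh}; in the foliated case $\mu$ may be singular, the leaves $F_\alpha(z_n)$ vary with the orbit, and the reference measures on them change step to step, so I would lean on \cref{ass:hdh} (assumed throughout this section for the foliated $\mu$) to supply both the ergodicity and the uniform summability of correlations needed to interchange the $W\to\infty$ limit with the ergodic averaging. Everything else is bookkeeping: the foliated multiple-integral notation is already set up in \cref{l:foliPhiMC,l:foliT}, and the per-step integrand is exactly the one produced by \cref{t:foli1int}, so no new analytic estimates beyond those of \cref{l:dh} are needed.
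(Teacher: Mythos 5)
Your proposal is correct and follows exactly the route the paper intends: the paper itself omits the proof of \cref{t:foli}, stating only that the argument is the same as in \cref{s:ergodic} with different notation, and your transplantation of the proof of \cref{l:dh} (stationarity of the orbit sequence under the invariant $\mu$, Birkhoff's theorem for stationary sequences applied to each correlation term from the foliated analogue of \cref{l:kd4h}, then centralization via \cref{l:int0foli}) is precisely that argument. Your remark that ergodicity must be extracted from \cref{ass:hdh} so that Birkhoff yields the constant spatial average is a fair and worthwhile observation that the paper glosses over.
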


Further generalizations are possible, as we did in \Cref{s:generalize}.
In particular, we can let the conditional density $p$ also depend on $\gamma$.
This requires only adding the $\delta p$ term to the formula.

\section{Kernel-differentiation algorithm}
\label{s:algorithm}

This section gives the procedure list of the algorithm and demonstrates the algorithm in several examples.
We no longer label the subscript $\gamma$, and $\delta$ can be the derivative evaluated at $\gamma\neq 0 $; the dependence on $\gamma$ should be clear from context.
The codes used in this section are available at \url{https://github.com/niangxiu/np}.

\subsection{Procedure lists}
\label{s:procedure}

First, we give the procedure list for the kernel-differentiation algorithm for $\delta h_T$ of a finite-time system, whose formula was derived in \Cref{l:dhTn}.
Then we give the procedure list for the infinite-time system, whose formula was derived in \Cref{t:dh}.
The foliated cases have the same algorithm, and we only need to change to the suitable definition of $dp/p$.

First, we give the algorithm for the finite-time systems. 
Here $L$ is the number of sample paths whose initial conditions are generated independently from $h_0$ (or $\mu_0$, if the measure is singular and does not have a density).
This algorithm requires that we already have a random number generator for $h_0$ and $p$:
this is typically easier for $p$ since we tend to use simple $p$ such as Gaussian; but $h_0$ might require more advanced sampling tools.

\vspace{0.2in}\hrule\vspace{0.05in}
\begin{algorithmic}
\Require $L$, random number generators for densities $h_0$ and $p_m$.
\For{$l = 1, \dots, L$}
  \State Independently generate sample $x_0$ from $X_0\sim h_0$, 
  \For{$m = 0, \dots, T-1$}
    \State Independently generate sample $y_{m+1}$ from $Y_{m+1}\sim p_{m+1}$
    \State $I_{l,m+1} \gets \delta f x_{l,m} \cdot \frac {dp}{p} (y_{m+1}) $
    \State $x_{l,m+1} \gets f(x_{l,m}) + y_{m+1}$
  \EndFor
  \State $\Phi_{l,T}\gets\Phi_T (x_{l,T})$
  \Comment $\Phi_{l,T}$ takes less storage than $x_{l,T}$.
  \State $S_{l} \gets - \sum_{m=1}^{T}  I_{l,m}$
  \Comment No need to store $x_m$ or $y_m$.
\EndFor
\State{$\Phi_{avg,T} := \int \Phi_T h_{\gamma,T} dx 
\approx \frac 1L \sum_{l=1}^L \Phi_{l,T}$
}
\Comment{
Evaluated at $\gamma=0$.
}
\State $\delta \Phi_{avg,T} \approx \frac 1L \sum_{l=1}^L S_l \left( \Phi_T (x_{l,T}) -\Phi_{avg,T} \right)$ 
\end{algorithmic}
\vspace{0.05in}\hrule\vspace{0.2in}

Then we give the procedure list for the infinite-time case.
Here, $M_{pre}$ is the number of preparation steps during which $x_0$ lands on the physical measure.
Here, $W$ is the decorrelation step number, typically $W \ll L$, where $L$ is the orbit length.
Since here $h$ is given by the dynamic, we only need to sample the easier density $p$.

\vspace{0.2in}\hrule\vspace{0.05in}
\begin{algorithmic}
\Require $M_{pre}, W\ll L$, random number generator for $p$.
\State Take any reasonable $x_0$.
\For{$m = 0, \cdots, M_{pre}$} 
  \Comment{To land $x_0$ on the physical measure.}
  \State Independently generate sample $y$ from $Y \sim p$
  \State $x_0 \gets f(x_0)+y$
\EndFor
\For{$m = 0, \cdots, W+L-1$}
    \State Independently generate sample $y_{m+1} $ from $Y_{m+1} \sim p$
    \State $I_{m+1} \gets \delta f  x_{m} \cdot \frac {dp}{p} (y_{m+1}) $
    \State $x_{m+1} \gets f(x_m)+y_{m+1} $
    \State $\Phi_{m+1} \gets \Phi(x_{m+1})$
\EndFor
\State $\Phi_{avg}:=\int \Phi h_{\gamma} dx\approx \frac 1L \sum_{l=1}^L \Phi_l$
\State $\delta \Phi_{avg} \approx  - \frac 1L \sum_{n=1}^W  \sum_{l=1}^L \left(\Phi_{n+l} - \Phi_{avg} \right) I_{l+1}$ 
\end{algorithmic}
\vspace{0.05in}\hrule\vspace{0.2in}

From a utility point of view, the kernel-differentiation is an `adjoint' algorithm.
Since we do not compute the Jacobian matrix at all, so the most expensive operation per step is only to compute $f(x_m)$ and $\delta f(x_m)$, so the cost of computing the linear response of the first parameter (or $\delta f$) is less than the adjoint methods.
On the other hand, the marginal cost for a new parameter in this algorithm is low and is equal to that of adjoint methods.

We remind the readers of some useful formulas when we use Gaussian noise in $\R^M$.
Let the mean be $\mu\in\R^M$, and let the covariance matrix be $\Sigma$, which is defined by
$ \Sigma_{i,j} := \operatorname{E} [(Y_i - \mu_i)( Y_j - \mu_j)] = \operatorname{Cov}[Y_i, Y_j] $.
Then the density is
\[ \begin{split}
p (y) = (2\pi)^{-k/2}\det (\Sigma)^{-1/2} \, \exp { -\frac{1}{2} (y - \mu)^\mathsf{T} \Sigma^{-1}(y - \mu) }.
\end{split} \]
Hence we have
\[ \begin{split}
\frac{dp}p (y)
= - \Sigma^{-1} (y-\mu) 
\in \R^M.
\end{split} \]
Typically we use normal Gaussian, so $\mu = 0$ and  $\Sigma = \sigma^2 I_{M\times M}$, so we have
\begin{equation}\begin{split}\label{e:ttbm}
\frac{dp}p (y)
= - \frac 1{\sigma^2} y
\in \R^M.
\end{split}\end{equation}

If we consider the co-dimension-$c$ foliation by planes
$F_\alpha = \{x_1=\alpha_1, \ldots, x_{c} = \alpha_{c}
\}$,
the above formula is still valid.
For geometers, it might feel more comfortable to write $\delta\tf\cdot dp$ together, which is just the directional derivative of the function $p$ defined in the subspace.

\subsection{Numerical example: tent map with \re{additive noise}}
\label{s:tent}

We use our ergodic algorithm to compute the linear response of the physical measure of the tent map with additive noise.
The dynamics is
\[ \begin{split}
  X_{n+1}=f^\gamma(X_n)+Y_{n+1}
  \quad \textnormal{mod 1,} \quad 
  \quad \textnormal{where} \quad 
  \\
  Y_n\iid \cN(0,\sigma^2),
  \quad \textnormal{} \quad 
  f^\gamma(x) = \begin{cases}
    \gamma x 
    \quad\text{if}\quad 0\le x\le 0.5,
    \\
    \gamma(1-x)
    \quad\text{otherwise.}
  \end{cases}
\end{split} \]
In this subsection, we fix the observable
\[ \begin{split}
\Phi(x)=x
\end{split} \]

Previous linear response algorithms based on randomizing algorithms for deterministic systems (such as the fast response algorithm, path perturbation algorithm, and divergence algorithm) do not work in this example.

We shall demonstrate the kernel-differentiation algorithm in this example, and show that adding noise case can give a
\re{linear response which is useful for optimizing the deterministic case.
}
In the following discussions, unless otherwise noted, the default values for the (hyper-)parameters are
\[ \begin{split}
 \gamma=3, \quad
 \sigma=0.1, \quad 
 W=7. 
\end{split} \]

\begin{figure}[ht] \centering
  \includegraphics[height=7cm]{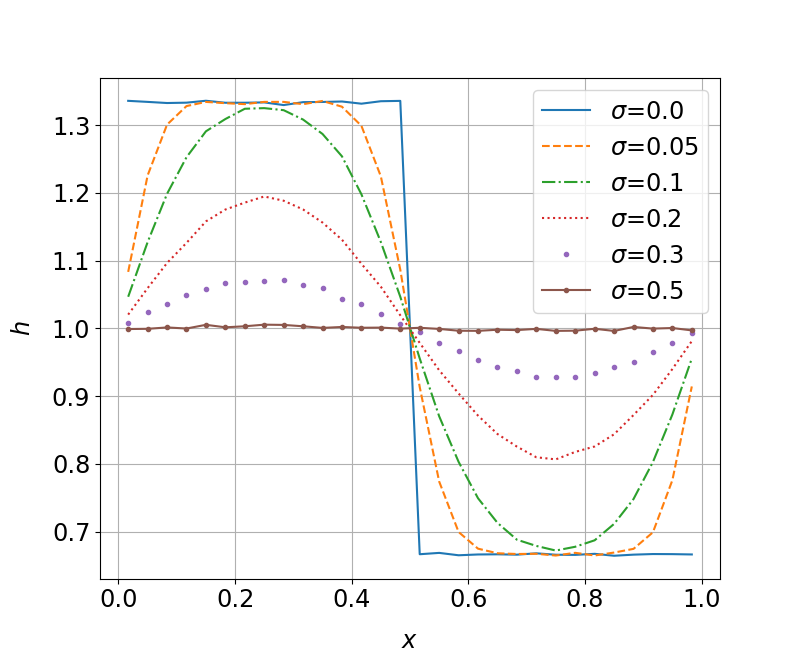} 
  \caption{
  Densities $h$ of different $\sigma$. 
  Here $L=10^7$.
  }
  \label{f:tentden}
\end{figure}

\begin{figure}[ht] \centering
  \includegraphics[scale=0.4]{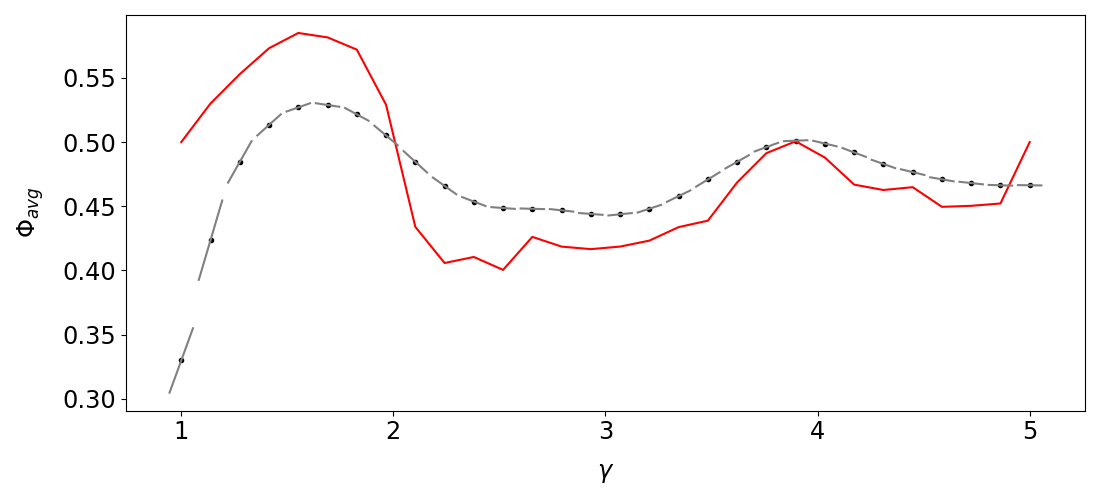}
  \caption{$\Phi_{avg}$ and $\delta \Phi_{avg} $ for different parameter $\gamma$.
  Here $L=10^6$ for all orbits.
  The dots are $\Phi_{avg}$, and the short lines are $\delta \Phi_{avg} $ computed by the kernel-differentiation response algorithm; they are computed from the same orbit.
  The long red line is $\Phi_{avg}$ when there is no noise ($\sigma=0$).
  }
  \label{f:opt}
\end{figure}

We first test the effect of adding noise on the physical measure.
As shown in \Cref{f:tentden}, the density converges to the deterministic case as $\sigma\rightarrow 0$.
Then we run the kernel-differentiation algorithm to compute linear responses for different $\gamma$.
As we can see in \Cref{f:opt}, the algorithm gives an accurate linear response for the noised case, which reveals the relation between the averaged observable $\Phi_{avg}=\int \Phi h dx$ and the parameter $\gamma$.
Moreover, the linear response in the noised case is a reasonable reflection of the observable-parameter relation of the deterministic case.
Most importantly, the local min/max of the noised and deterministic cases are close.
Hence, we can use the kernel-differentiation algorithm on the noised case to help optimizations in the deterministic case.

\begin{figure}[ht] \centering
  \includegraphics[width=0.45\textwidth]{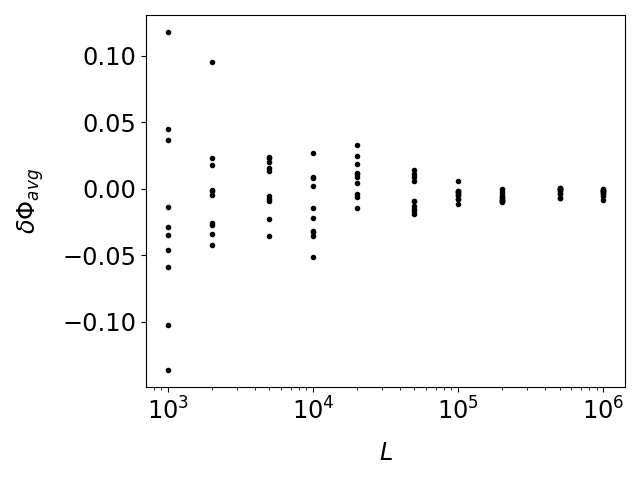}
  \includegraphics[width=0.45\textwidth]{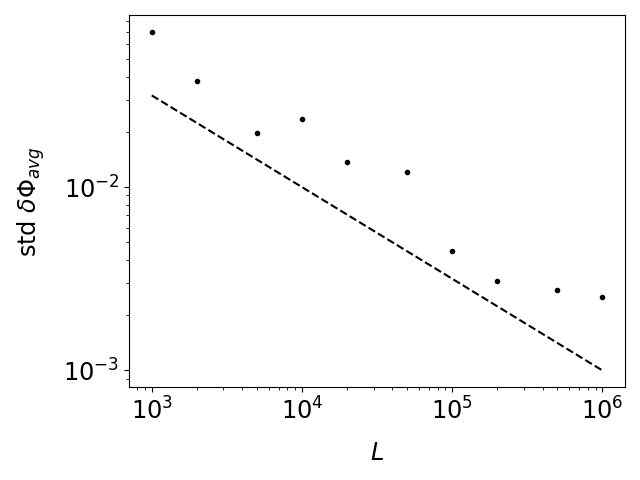}
  \caption{Effects of orbit length $L$.
  Left: derivatives from 10 independent computations for each $L$.
  Right: the standard deviation of the computed derivatives, where the dashed line is $L^{-0.5}$.}
  \label{f:tent_L}
\end{figure}

Then we show the convergence of the kernel-differentiation algorithm with respect to $L$ in \Cref{f:tent_L}.
In particular, the standard deviation of the computed derivative is proportional to $L^{-0.5}$.
This is the same as the classical Monte-Carlo method in probability, with $L$ being the number of samples.
This is expected: since here we are given the dynamics, our samples are canonically the points on a long orbit.

\begin{figure}[ht] \centering
  \includegraphics[width=0.45\textwidth]{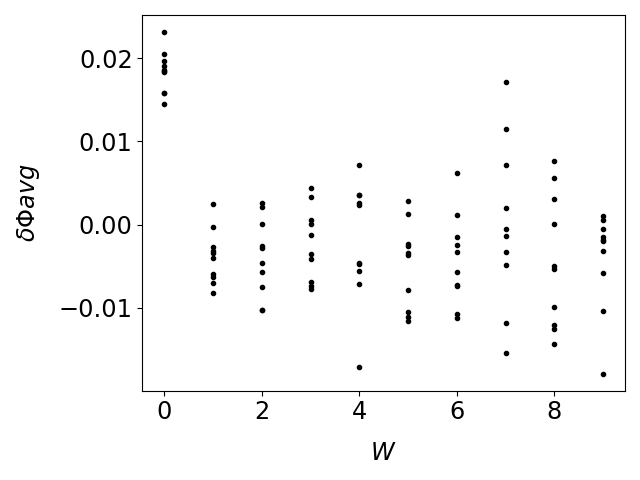}
  \includegraphics[width=0.45\textwidth]{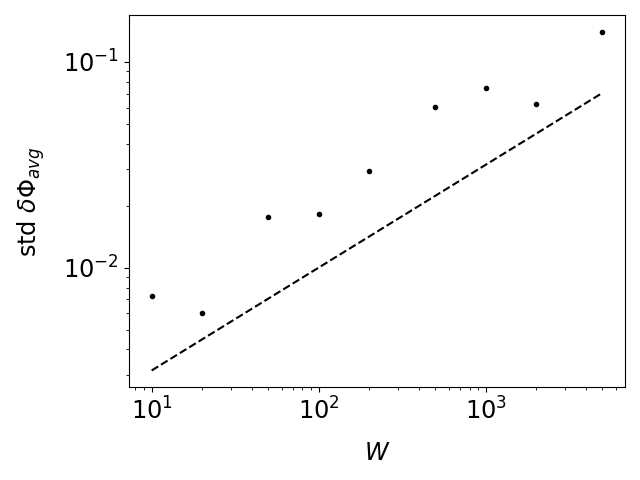}
  \caption{Tent map, effects of $W$. 
  Here $L=10^5$.
  Left: derivatives computed by different $W$'s.
  Right: standard deviation of derivatives, where the dashed line is $0.001W^{0.5}$.}
  \label{f:tent_W}
\end{figure}

\Cref{f:tent_W} shows that the bias in the average derivative decreases as $W$ increases, but the standard deviation increases roughly like $O(W^{0.5})$.
Note that if we do not centralize $\Phi$, then the standard deviation would be like $O(W)$.

\subsection{An unstable neural network with foliated perturbations}
\label{s:NN}

\subsubsection{Basic model}
\hfill\vspace{0.1in}

This subsection considers an unstable neural network whose dynamic is deterministic and whose exact linear response does not provide useful information.
Since the perturbation is foliated, we can add a low-dimensional noise, which incurs a smaller error.
Then we use the kernel-differentiation algorithm to compute the linear response of the modified system.

A main goal in the design of neural networks is to suppress gradient explosion, which is essentially the definition of unstable dynamics, or chaos.
However, even with modern architecture, there is no guarantee that gradient explosion (or chaos) can be precluded.
Unlike the conventional backpropagation method, the kernel-differentiation method does not perform propagation at all, so we are not hindered by gradient explosion.
However, we add extra noise at each layer, which introduces a (potentially large) systematic error.

\lil{yin}
\ree{
This example is also difficult for method which involves finite-element type method (this means to construct some approximation on a set of basis functions). 
Generally, the Monte-Carlo type method is more efficient when dimension $\ge4$ (see \cite[appendix A]{TrsfOprt} for a rough comparison).
This neural network example, with dimension $M=9$, is almost impossible for finite-element type method.
}

The variables of interest are $X'_n\in\R^9$, $n=0,1,\cdots, T$, where $T=50$, and the dynamic is 
\[ \begin{split}
  X'_{n+1}=f^\gamma(X'_n),
  \quad \textnormal{where} \quad 
  f^\gamma(x) = J \tanh (x + \gamma \one),
  \quad 
  J = CJ_0,
  \quad 
  \\
  J_0 = 
  \left[\begin{array}{ccccccccc}
 -0.54 &-1.19 &-0.33 & 1.66 &-0.5  &-1.3  & 1.52 &-0.5  & 1.95 \\
 -1.6  &-1.55 &-1.45 & 0.61 & 1.92 & 0.59 &-0.16 &-1.14 &-1.27 \\
 -0.59 &-0.65 &-1.32 &-1.46 &-0.82 &-0.95 &-1.47 &-0.08 &-0.38 \\
 -0.78 &-0.26 & 0.87 & 1.99 & 0.07 & 0.87 &-0.79 &-0.44 & 1.11 \\
  0.8  &-1.28 &-0.52 &-1.01 & 1.49 & 1.49 &-1.65 &-0.45 & 0.21 \\
 -1.77 & 0.03 &-1.39 &-0.28 & 0.44 & 1.27 & 0.61 & 0.01 &-0.02 \\
 -0.18 &-0.29 &-0.73 & 0.53 &-0.82 &-1.58 &-1.41 & 0.07 &-1.84 \\
  0.64 & 0.86 & 0.73 & 0.96 &-0.06 & 0.04 & 1.1  & 1.22 &-0.28 \\
  1.18 &-1.95 &-0.37 & 0.01 & 1.24 &-0.32 & 0.43 & 0.06 &-1.28
  \end{array}\right].
\end{split} \]
Here $I$ is the identity matrix, $\one=[1,\cdots, 1]$, and for $x =[x^1,\cdots,x^9], x^i\in\R$,
\[ \begin{split}
  \tanh(x)
  := [\tanh(x^1),\cdots,\tanh(x^9)].
\end{split} \]
We set $X'_0 \sim \cN(0,I)$.
The objective function is
\[
\Phi'(X'_T) = \sum_{i = 1}^9 (X'_T)^i.
\]

There is a somewhat tight region for $C$ such that the system is unstable: when $C<1$, then $J$ is small, so the Jacobian is small; when $C>10$, then the points tend to be far from zero, so the derivative of $\tanh$ is small, so the Jacobian is also small.
Our choice $C=4$ gives roughly the most unstable network.

We give a rough cost comparison with the ensemble (or stochastic gradient) formula of the linear response, which is
\[ \begin{split}
  \delta \E (\Phi')
  = \E \left(\sum_{n=1}^{50} \delta f(X'_{50-n}) \cdot f^{*n} d\Phi(X'_{50})\right)
\end{split} \]
Here $f^*$ is the backpropagation operator for covectors, which is the transpose of the is the Jacobian matrix $Df$.
On average $|f^{*50}| \approx 10^4\sim10^5$, $d\Phi\cdot \delta f\approx 10$, so the integrand size is about $10^5\sim 10^6$.
This would require approximately $10^{10} \sim 10^{12}$ samples (a sample is a realization of the 50-layer network) to reduce the sampling error to $O(1)$.
That is not affordable.

Our model is modified from its original form in \cite{Cessac04,Cessac06}.
In the original model, the entries in the weight matrix $J$ were randomly generated according to certain laws; as discussed in \Cref{s:rand}, we can rewrite the random maps by added noise, then obtain exact solutions to the original problem.
We can also further generalize this example to time-inhomogeneous cases.

Finally, we acknowledge that our neural network's architecture is outdated, but modern architectures are not good tests for our algorithm.
Because the backpropagation method does not work in chaos, current architectures typically avoid chaos.
The kernel-differentiation method might help us to have more freedom in choosing architectures beyond the current ones.

\subsubsection{Explicit foliated chart and artificial noise}
\hfill\vspace{0.1in}

\nax{
We show that the perturbation induced by changing the bias parameter is foliated.
}
We say that a perturbation is foliated if there is a fixed foliation such that for a small interval of $\gamma$, $\delta f$ is always parallel to that foliation.
The perturbation in bias is foliated, since for any $\gamma_1, \gamma_2$,
\[
\left. \pp f \gamma \right|_{\gamma = \gamma_1}(x_1)
= \left. \pp f \gamma \right|_{\gamma = \gamma_2}(x_2),
\quad\textnormal{whenever}\quad
f_{\gamma_1}(x_1)= f_{\gamma_2}(x_2).
\]
Hence, the vector field $\delta \tf$ is invariant for an interval of $\gamma$, and the foliation is given by the streamlines of $\delta f$.

In fact, we can write down the explicit expression of a foliated chart, that is, change the coordinate by $X = X' + \gamma\one$, then the dynamic and observable under the new coordinate is 
\[
  X_{n+1}=J \tanh (X_n) + \gamma\one,
  \quad
  \Phi(X_T) = -9\gamma + \sum_{i = 1}^9 X_T^i.
\]
Note that we can allow the chart to depend on $\gamma$.
In addition, the initial distribution is changed to
\[
X_0 = Y_0 + \gamma \one,
\quad\textnormal{where}\quad
Y_0 \sim \cN(0,I).
\]

Now it is clear that $\delta f$ is always in the direction of $\one$;
therefore, we only need to add noise in this direction.
We compute the linear response of the system
\begin{equation} \label{e:asd}
  X_{n+1}= J \tanh (X_n) + \gamma\one + Y_{n+1} \one/\sqrt{M},
  \quad \textnormal{where} \quad 
  Y_n \iid \cN(0,\sigma^2).
\end{equation}
We shall compare with adding noise in all directions, with $Y'\in \R^M$
\[
  X_{n+1} = J \tanh (X_n) + \gamma\one + Y'_{n+1},
  \quad \textnormal{where} \quad 
  Y'_n\iid \cN(0,\sigma^2 I).
\]
Here $I$ is the $M\times M$ identity matrix.

The foliated case has smaller noise added to the system, since $Y'$ equals summing i.i.d pieces of $Y$ in orthogonal directions for $M=9$ times.
Then we show that the two cases with noise have the same computational cost.
As discussed in \Cref{s:costErr}, the computational cost is determined by the number of samples required, which is further determined by the magnitude of the integrand.
To verify that the costs are the same, we just need to check that the size of the integrands are the same, that is, whether
\[
\E\left[\left(\frac 1{p'} \delta f \cdot dp'\right)^2\right]
= \E\left[\left(\frac 1p \delta f \cdot dp\right)^2\right],
\]
where $p$ is the density in the 1-dimensional subspace, and $p'$ is the density in all directions.
By \Cref{e:ttbm}, and note that $\delta f = \one$ is constant, we want
\[
\E\left[\left(\frac{1}{\sigma^2} \one \cdot Y'\right)^2\right]
= \E\left[\left(\frac{1}{\sigma^2} \one \cdot \one Y /\sqrt{M}\right)^2\right].
\]
This is indeed true; just notice that $\one /\sqrt{M}$ is the unit vector. 
Hence, for the same cost, the foliated case has a smaller noise error.

Note that now both $\Phi$ and $h_0$ depend on $\gamma$ due to the chart.
Hence, the linear response has two more terms,
\begin{equation} \label{e:rrr}
\delta \int \Phi(x_T) d\mu_T(x_T) 
= T +
\int \delta \Phi(x_T) d\mu_T(x_T) 
+ \E  \left[ \Phi(X_T) \frac{\delta h_0}{h_0}(X_0)\right],
\end{equation}
where the first term $T$ is given by \Cref{t:dhTnfoli}, and the other two terms are new.
For the second term, since the integrand $\delta \Phi = -9$ is constant, the second term is $-9$; if the integrand is not constant, then we can still easily perform Monte-Carlo sampling.
For the third term, the expectation $\E$ is just an abbreviation for multiple integration in \Cref{t:dhTnfoli}.
By definition,
\[
h_0(x_0)
= (2\pi)^{-k/2} \exp { -\frac{1}{2} (x_0-\gamma\one )^\mathsf{T} (x_0-\gamma\one) },
\]
so $\frac{\delta h_0}{h_0}(x_0) = x_0\cdot \one -\gamma M$.

\subsubsection{Numerical results}
\hfill\vspace{0.1in}

We use the kernel-differentiation method given in \Cref{t:dhTnfoli} to compute the main term in \Cref{e:rrr}, under two different noises: a noise in all directions, and a 1-dimensional noise along the direction of $\one$.

\begin{figure}[ht] \centering
  \includegraphics[scale=0.4]{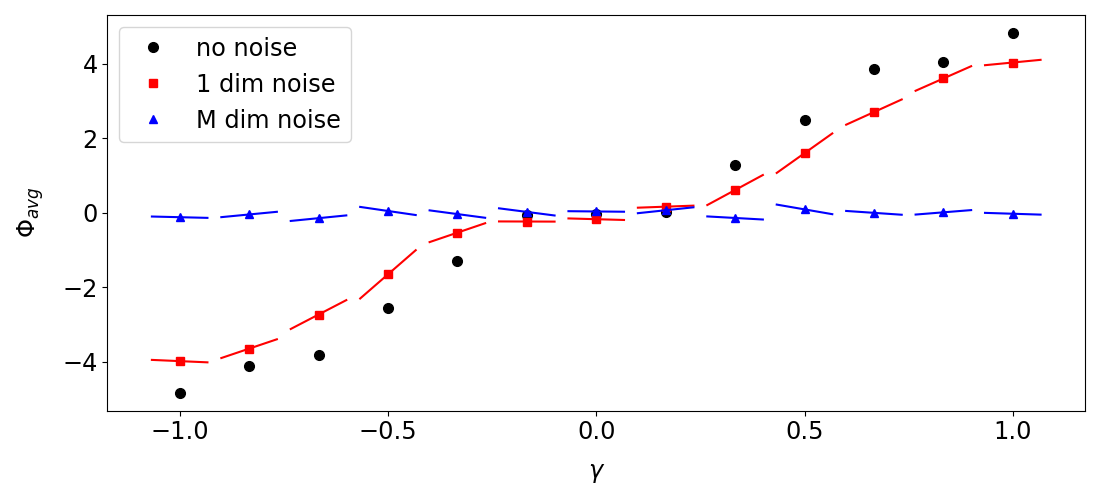}
  \caption{$\Phi_{avg}$ and $\delta \Phi_{avg} $ for different parameter $\gamma$ and different noises.
  For the two cases with noise, $\sigma = 1.5$.
  Here the total number of samples is $L=10^4$.
  }
  \label{f:NN1.5}
\end{figure}

\begin{figure}[ht] \centering
  \includegraphics[scale=0.4]{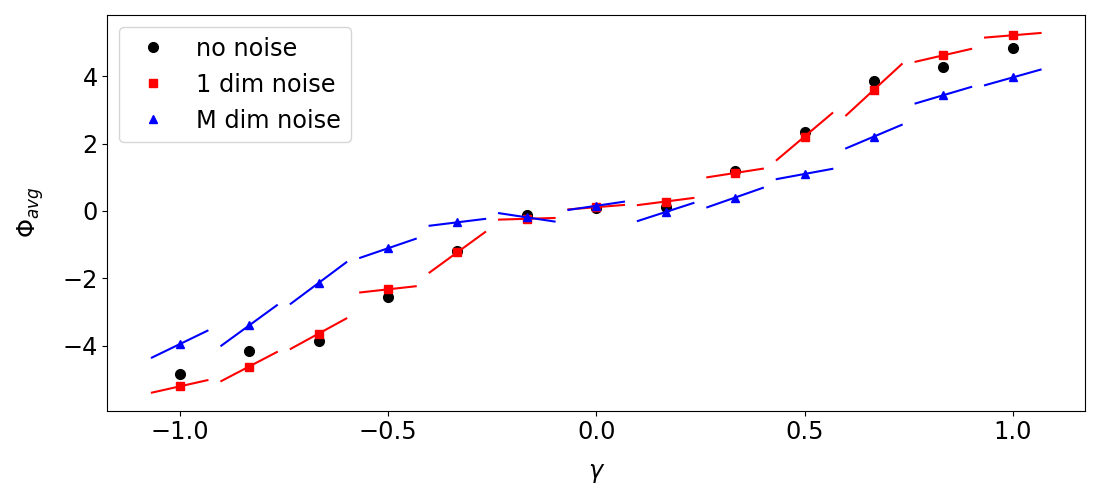}
  \caption{ $\sigma = 0.5$.
  Other settings are the same as \Cref{f:NN1.5}.
  }
  \label{f:NN0.5}
\end{figure}

\Cref{f:NN1.5} and \Cref{f:NN0.5} show the results of the kernel-differentiation algorithm on this example.
The algorithm correctly computes the derivative of the problem with added noise.
\ree{
The total time cost of running the algorithm on $L=10^4$ orbits to obtain a linear response, using a 1 GHz computer thread, is 3 seconds.
}

As we can see by comparing the two figures,
the case with 1-dimensional noise is a better approximation of the deterministic case, which is what we are actually interested in.
In particular, when $\sigma=1.5$, the $M$-dimensional noise basically hides away any trend between the parameter $\gamma$ and the average observable $\Phi_{avg}$.
In contrast, the 1-dimensional noise 
\re{
has a much smaller impact on the $\gamma\sim \Phi_{avg}$ relation.
If we want to optimize $\gamma$ for the smallest $\Phi_{avg}$, then adding one-dimensional noise is an acceptable modification to the original system, since the optimum has not changed much.
On the other hand, adding $M$-dimensional noise might change the system too much, and the modified system does not help to optimize the original system.
}

Also note that it is not free to decrease $\sigma$ for the purpose of reducing the systematic error.
As we can see by comparing the two figures and as we shall discuss in \Cref{s:costErr}, a small $\sigma$ would increase the sampling error, which further requires more samples, thus increasing the cost.
We must use certain structures, such as adding low-dimensional noise when the perturbation is foliated, to reduce the approximation error with respect to the deterministic case.

\section{Discussions}
\label{s:discuss}

The kernel-differentiation algorithm is robust, does not have systematic errors, has low cost per step, and is not cursed by dimensionality.
But there is a caveat: When the noise is small, we need many data for the Monte-Carlo method to converge.
Hence, we cannot expect to use the small $\sigma$ limit to obtain an easy approximation of the deterministic systems.

In this section, we first give a rough cost-error estimation of the problem, and estimate the number of samples $L$ and the desired noise intensity $\sigma$.
Then we discuss how to potentially reduce the cost by further combining with the fast response algorithm, which was developed for deterministic linear responses.

\subsection{A rough cost-error estimation}
\label{s:costErr}

When the problem is intrinsically random, the noise scale $\sigma$ has been fixed.
In this case, there are two sources of error.
The first is due to using a finite decorrelation step number $W$; this error is $O(\theta^W)$ for some $0<\theta<1$.
The second is the sampling error due to using a finite number $L$ of samples.
Assume that we use Gaussian noise; since we are averaging a large integrand to get a small number, we can approximate the standard deviation of the integrand by its absolute value.
The integrand is the sum of $W$ copies of $\frac{dp}p = -\frac y{\sigma^2}\sim \frac 1\sigma$, so the size is roughly $O(\sqrt W /\sigma)$.
So, the sampling error is $O(\sqrt W /\sigma\sqrt L)$, where $L$ is the number of samples.
Together we have the total error $\eps$
\[ \begin{split}
  \eps = O(\theta^W) + 
  O \left(\frac {\sqrt W} {\sigma\sqrt L} \right).
\end{split} \]

This gives us a relation among $\eps, W$, and $L$, where $L$ is proportional to the total cost.
In practice, we typically set the two errors be roughly equal,
which gives an extra relation for us to eliminate $W$ and obtain the cost-error relation
\[ \begin{split}
  \theta^W = \frac {\sqrt W} {\sigma\sqrt L},
  \quad \textnormal{so} \quad 
  L 
  = O \left( \frac { W} {\sigma^2 \theta^{2W}} \right)
  = O \left( \frac {\log _\theta \eps} {\sigma^2 \eps^{2}} \right)
  .
\end{split} \]
This is rather typical for Monte-Carlo method.
But the problem is that the cost can be large for small $\sigma$.

On the other hand, if we use a random system to approximate deterministic systems, then we can have the choice on the noise scale $\sigma$.
Now each step further incurs an approximation error $O(\sigma)$ on the physical measure.
This error decays, but accumulates, and the total error in the physical measure is $O(\sigma/(1-\theta))$.
Hence, if we are interested in the trend between $\Phi_{avg}$ and $\gamma$ for a certain step size $\Delta \gamma$ (this is typically known from the practical problem), then the error between the slope of the deterministic system and its random approximation is $O(\sigma/\Delta \gamma (1-\theta))$.
The slope of the random system can be computed by the kernel-differentiation method.
(This explains why the random system has the linear response whereas the deterministic system might not, since the error is large for $\Delta \gamma$ small.)
Together, the total error $\eps$ of the kernel-differentiation method, in terms of approximating the slope of deterministic systems over an interval of size $\Delta \gamma$, is
\begin{equation} \begin{split} \label{e:lian}
  \eps 
  = O(\theta^W) 
  + O \left(\frac {\sqrt W} {\sigma\sqrt L} \right)
  + O \left(\frac {\sigma} {\Delta \gamma (1-\theta)} \right).
\end{split} \end{equation}

Again, in practice we want the three errors to be roughly equal, which shall prescribe the size of $\sigma$,
\[ \begin{split}
  \sigma = \eps \Delta \gamma (1-\theta),
  \quad \textnormal{so} \quad 
  L 
  = O \left( \frac {\log _\theta \eps} {\sigma^2 \eps^{2}} \right)
  = O \left( \frac {\log _\theta \eps} {\eps^{4}  \Delta \gamma^2 (1-\theta)^2} \right).
\end{split} \]
Since $\Delta \gamma$ can be small, this cost can be much larger than just $\eps^{-4}$, which is already a high cost.

Finally, we acknowledge that our estimation is very inaccurate, but the point we make is solid, that is, the small noise limit is numerically expensive to obtain.

\subsection{\ree{Three basic ideas for linear response}}
\label{s:3lin}

This subsection hopes to convince readers that there are, to a degree, \textit{only} three most straightforward ideas in terms of computing the linear response of random dynamical systems:
the path-perturbation, the divergence, and the kernel-differentiation.
We derive the three formulas for one-step, which is enough to illustrate the differences and connections.
It also shows the possibility of combining all the basic ideas, which we discuss in \Cref{s:unify}.

The same as \Cref{s:prep}, let $X_0\sim h_0$, $Y\sim p$, the one-step dynamics is $X_1 = f^\gamma(X_0)+Y$.
Denote the expectation of $X_1$ as
\begin{equation} \begin{split} \label{e:start}
  \E[\Phi(X_1)]
  := \int \int \Phi(x_1) h_0(x_0)  p(y) dx_0 dy.
\end{split} \end{equation}
Note that here $x_1$ is a function of the two dummy variables $x_0$ and $y$, and depends on $\gamma$.
The three variables are related by
\[ \begin{split}
  x_1 = f^\gamma(x_0)+y.
\end{split} \]

If we differentiate \Cref{e:start} as is, then we obtain the one-step path-perturbation formula,
\[ \begin{split}
  \delta \E[\Phi(X_1)]
  = \int \int d \Phi(x_1) \delta f(x_0) h_0(x_0)  p(y) dx_0 dy.
\end{split} \]
Here, $d \Phi(x_1) \delta f(x_0)$ is the perturbation in $\Phi$ due to the perturbation in $x_1$, with dummy variables $x_0$ and $y_1$ fixed.
Note that here $\Phi$ is differentiated.
For many-step systems, we can still use this idea, that is, we first compute how each path is perturbed under quenched noise and initial condition, and then compute the perturbation in the path and then the perturbation in $\Phi$.
The problem is that when the system is unstable, this path-perturbation diverges.

However, we do not necessarily use $x_0$ and $y$ as dummy variables: There are (only) three ways to choose two out of $x_0, x_1$, and $y$!
If we change the variable $y$ to $x_1=f^\gamma(x_0) + y$, and use $x_0$ and $x_1$ as dummy variables, then
\[ \begin{split}
  \E[\Phi(X_1)]
  = \int \int \Phi(x_1) h_0(x_0)  p(x_1-f^\gamma(x_0)) dx_0 dx_1.
\end{split} \]
$y$ is the function of $x_0, x_1$ and $\gamma$.
Take derivative, then $p$ will be differentiated, and we get the kernel-differentiation formula for one-step.
For many-step systems, we can still use this idea, that is, we fix the orbit, and see how the probability density of getting this orbit is changed.
This is explained in detail in previous parts of this paper.

Finally, to obtain the divergence formula, we change the variable $x_0$ to $x_1=f^\gamma(x_0) + y$.
This causes a nontrivial Jacobian determinant,
\[ \begin{split}
  \left| \dd {x_1}{x_0} \right|
  = \left| \pp {f^\gamma(x_0)}{x_0} \right|
\end{split} \]
So
\[ \begin{split}
  \E[\Phi(X_1)]
  = \iint \Phi(x_1) h_0(x_0) \left| \pp {f^\gamma(x_0)}{x_0} \right|^{-1}  p(y) dy dx_1,
  \quad \textnormal{where} \quad
  x_0 = (f^\gamma)^{-1}(x_1-y).
\end{split} \]
For now, we assume $f^\gamma$ is injective; if it is $n$-to-1, then we need to further sum over its preimage.
Denote $\tf:=f^\gamma\circ f^{-1}$, by the definition of transfer operators, 
\[ \begin{split}
  \E[\Phi(X_1)]
  = \iint \Phi(x_1) (L_\tf L_{f}h_0)(x_1-y) p(y) dy dx_1.
\end{split} \]

When taking the derivative, we need to adopt the wisdom that `the derivative of the Jacobian determinant is typically a divergence'.
Intuitively, this is because the perturbation in the Jacobian is related to the change in density by the perturbation $\delta f$, which is further related to its divergence.
Mathematically, this wisdom is just a wrap over exterior algebra, and readers can see \cite{TrsfOprt} for more details and extension to (unstable) submanifolds.
Substituting this wisdom into the derivative, we arrive at
\[ \begin{split}
  \delta \E[\Phi(X_1)]
  = \iint \Phi(x_1) \delta L_{\tf} \left(L_{f_0} h_0 \right) (x_1-y) p(y) dy dx_1,
  \quad \textnormal{where} \quad
  \delta L_{\tf} q = \div (q \delta \tf).
\end{split} \]
Note that here $\delta \tf$ is differentiated.

These one-step linear response formulas are equivalent under integration by parts.
But they perform quite differently for numerical applications and in proofs, since they put derivative on different terms.
The difference is even sharper for multi-step and infinite-step systems: we reviewed these differences in \Cref{s:review,s:review2}.
However, over the years, the choice among the three basic ideas became almost subconscious.
Now we have recalled the difference and strength of these ideas, so we can better understand why and how to combine them.

\subsection{A potential program to unify three linear response formulas}
\label{s:unify}

We sketch a potential program on how to further reduce the cost/error of computing the approximate linear response (in terms of reflecting the parameter-observable relation) of nonhyperbolic deterministic systems.
As is known, nonhyperbolic systems do not typically have linear responses, so we must mollify, and in this paper we choose to mollify by adding noise in the phase space during each time step.
But, as we see in \Cref{s:costErr}, adding a large noise increases the noise error, the third term in \Cref{e:lian}; while adding a small noise increases the sampling error, the second term in \Cref{e:lian}.

There are cases with special structures, and we have specific tools which could give a good approximate linear response.
When the system has a foliated structure, this paper shows that we can add low-dimensional noise.
When the unstable dimension is low, computing only the shadowing or stable part of the linear response may be a good approximation (see \cite{Ruesha}).
When unstable modes are of high frequency, Fang and Papadakis showed that we can use low-pass filters to remove unstable modes from shadowing solutions, further improving computational efficiency \cite{FP25}.

For the more general situation, a plausible solution is to add a large but local noise, only at locations where the hyperbolicity is bad.
The benefit of this program is that, if the singularity set is low-dimensional, then the area where we add big noise is small, and the noise error is small.
For the continuous-time case, there seems to be an easy choice: we can let the noise scale be reverse proportional to the flow vector length.
This should at least solve the singular hyperbolic flow cases \cite{Viana2000,Wen2020}, where the bad points coincide with zero velocity.
But for the discrete-time case, it can be difficult to find a natural criteria which is easy to compute.

In terms of formulas, we use the kernel-differentiation formula (the generalized version in \Cref{s:generalize}) where the noise is large, so the sampling error is also small.
Where the noise is small, we use the fast-response algorithm, which is efficient regardless of noise scale, but requires hyperbolicity.
This program hinges on the assumption that the singularity set is small or low-dimensional.

This program also requires us to invent more techniques.
For example, we need a formula that transfers information from the kernel-differentiation formula to the fast response formulas.
For example, the equivariant divergence formula, in its original form, requires information from the infinite past and future \cite{TrsfOprt}.
But we can rerun the proof and restrict the time dependence to finite time; this requires extra information on the interface, such as the derivative of the conditional measure and the divergence of the holonomy map.
These information should and could be provided by the kernel-differentiation formula. 
On the other hand, I recently gave the path-kernel formula, which combines kernel-differentiation with path-perturbation \cite{dud}.

Historically, there are three famous linear response formulas.
The path-perturbation formula does not work for chaos; 
the divergence formula is likely to be cursed by dimensionality;
the kernel-differentiation formula is expensive for small noise limit.
The fast response formula combines the path-perturbation and the divergence formula, it is a function (not a distribution) so can be sampled;
it is neither affected by chaos or high-dimension, but is rigid on hyperbolicity.
In the future, besides trying to test above methods on specific tasks, we should also try to combine all three formulas together into one, which might provide the best approximate linear responses with highest efficiency.

\section*{Acknowledgements}

The author is in great debt to Stefano Galatolo, Yang Liu, Caroline Wormell, Wael Bahsoun, and Gary Froyland for helpful discussions.

\section*{Data availability statement}
The code used in this manuscript is at \url{https://github.com/niangxiu/kernelDiff}.
There is no other associated data.



\bibliographystyle{abbrv}
{\footnotesize\bibliography{library}}

\end{document}